\newtheorem{theorem}{Theorem}
\newtheorem{definition}{Definition}
\newtheorem{corollary}{Corollary}
\newcommand{\be}{\begin{equation}}
\newcommand{\ee}{\end{equation}}
\newcommand{\ba}{\begin{align}}
\newcommand{\ea}{\end{align}}
\newcommand{\bea}{\begin{array}}
\newcommand{\eea}{\end{array}}
\def\EquationsBySection{\def\theequation
{\thesection.\arabic{equation}}%
\@addtoreset{equation}{section}}
\newcommand\old[1]{}
\newcommand{\pend}{\hfill \thicklines \framebox(6.6,6.6)[l]{}}
\renewenvironment{proof}{\noindent {\it  Proof.} \rm}{\pend}
\journal{}
\begin{document}

\begin{frontmatter}



\title{Well-posedness and regularity of mean-field backward doubly stochastic Volterra integral equations and applications to dynamic risk measures}


\author[label1]{Bixuan Yang}
\ead{bixuanyang@126.com}
\author[label2]{Jinbiao Wu}
\ead{wujinbiao@csu.edu.cn}
\author[label2]{Tiexin Guo \corref{cor1}}
\ead{tiexinguo@csu.edu.cn}
\cortext[cor1]{Corresponding author.}

\address[label1]{\small\it School of Mathematics and Statistics, Hunan First Normal University, Changsha 410205, Hunan, China}
\address[label2]{\small\it School of Mathematics and Statistics, Central South University, Changsha 410083, Hunan, China}

%

\begin{abstract}
In this paper, the theory of mean-field backward doubly stochastic
Volterra integral equations (MF-BDSVIEs) is studied.
First, we derive the well-posedness of M-solutions to MF-BDSVIEs,
and prove the comparison theorem for such a type of equations.
Furthermore, the regularity result
of the M-solution for MF-BDSVIEs is established by virtue of
Malliavin calculus. Finally, as an application of
the comparison theorem, we obtain the properties of dynamic risk measures
governed by MF-BDSVIEs.
\end{abstract}

\begin{keyword}

Mean-field backward doubly stochastic Volterra integral equation;
Regularity of M-solutions; Malliavin calculus; Comparison theorem;
Dynamic risk measure




\end{keyword}

\end{frontmatter}

\section{Introduction}
Since Pardoux and Peng \cite{Pardoux1990} first established the
well-posedness of nonlinear backward stochastic differential equations (BSDEs) in 1990,
this type of BSDEs has frequently appeared in many
practical problems such as finance \cite{Nualart2001}, insurance \cite{Sun2018}, risk management \cite{Elliott2015},
stochastic optimal control \cite{Wu2020} and stochastic differential games \cite{Yang2019}.
Especially, in order to better simulate the uncertainty of investment risks in the financial market,
dynamic risk measures based on various kinds of BSDEs came into being to evaluate and avoid risks
(see \cite{Guo2020,Ji2019,Wang2021a}).
In 1994, backward doubly stochastic differential equations (BDSDEs) as a generalization of BSDEs
were introduced by Pardoux and Peng \cite{Pardoux1994},
including both a forward It\^{o} integral and a backward It\^{o} integral.
In addition, they also established a probabilistic
representation for the solution of quasilinear stochastic partial differential equations in \cite{Pardoux1994},
which leads to scholars' much interest in studying the regularity of BSDEs by means of Malliavin calculus,
for example, see El Karoui et al. \cite{Karoui1997}, Ma and Zhang \cite{Ma2002}, Bouchard et al. \cite{Bouchard2018},
Wen and Shi \cite{Wen2019}, Bachouch and Matoussi \cite{Bachouch2020}.

Inspired by stochastic input-output production technologies \cite{Duffie1986},
Lin \cite{Lin2002} primarily studied a class of backward stochastic Volterra integral
equations (BSVIEs) to characterize the property of state variables with memories.
Subsequently, Yong \cite{Yong2006,Yong2008} carefully considered the following general BSVIE
of the form
\begin{equation*}
\begin{aligned}
Y(t)=\zeta(t)+\int_{t}^{T} f(t, s, Y(s), Z(t, s), Z(s, t))ds
-\int_{t}^{T} Z(t, s) d\overrightarrow{W}(s), ~~t\in[0, T].
\end{aligned}
\end{equation*}
Since $\int_{t}^{T} Z(t, s) d\overrightarrow{W}(s), 0\leq t \leq T$ is not a martingale,
It\^{o}'s formula no longer applies to BSVIEs. For this reason, the definition of M-solutions
was first introduced by Yong \cite{Yong2008} to prove the well-posedness and regularity results of BSVIEs.
Moreover, their applications to dynamic risk measures and stochastic optimal control systems were also investigated.
Later, Ren \cite{Ren2010} analyzed the well-posedness and stability of the adapted
M-solution to BSVIEs with jumps in Hilbert spaces.
Wang \cite{Wang2021b} obtained the existence, uniqueness, some regularity
results and the Feynman-Kac formula for extended BSVIEs. Recently,
Shi et al. \cite{Shi2020} deeply investigated a backward doubly stochastic Volterra integral
equation (BDSVIE) involving the well-posedness result, the comparison theorem and
the stochastic maximum principle.
Miao et al. \cite{Miao2021} discussed the dynamic risk measure for
anticipated BDSVIEs based on a comparison theorem.

It is well-known that mean-field stochastic systems were initiated in \cite{Kac1956} to depict
the physical phenomena, and have been used widely in practical problems. Buckdahn et al. \cite{Buckdahn2009a,Buckdahn2009b} considered
the viscosity solution problem for a kind of mean-field BSDEs corresponding to
the nonlocal partial differential equation in detail. Since then,
mean-field models have attracted many scholars to study, for instance,
the theory of general mean-field BDSDEs with continuous coefficients \cite{Li2022} and mean-variance portfolio selection
problems \cite{Shen2013}. Furthermore,
Shi et al. \cite{Shi2013} proved the existence and uniqueness to mean-field
BSVIEs via M-solutions, and explored the maximum principle for
mean-field stochastic control systems.
Wu and Hu \cite{Wu2023} were concerned with a class of BDSVIEs with McKean-Vlasov type
defined on the product probability space, and mainly studied the duality principle
through the linear Volterra integral equations.

In this paper, we focus on a new kind of mean-field BDSVIEs (MF-BDSVIEs) as follows:
\begin{equation*}
\begin{aligned}
Y(t)&=\zeta(t)+\int_{t}^{T} f(t, s, Y(s), Z(t, s), Z(s, t), \mathbb{E}[Y(s)], \mathbb{E}[Z(t, s)], \mathbb{E}[Z(s, t)])ds\\
&~~~+\int_{t}^{T} g(t, s, Y(s), Z(t, s), Z(s, t), \mathbb{E}[Y(s)], \mathbb{E}[Z(t, s)], \mathbb{E}[Z(s, t)]) d\overleftarrow{B}(s)\\
&~~~-\int_{t}^{T} Z(t, s) d\overrightarrow{W}(s), ~~~~t\in[0, T],
\end{aligned}
\end{equation*}
where $f$ and $g$ are given maps, the $d\overrightarrow{W}$-integral means a forward It\^{o} integral, and the $d\overleftarrow{B}$-integral means a backward It\^{o} integral.
According to the physical meaning of mean-field systems, the MF-BDSVIE can be regarded as the mean-square limit of an interacting particle system ($n\rightarrow\infty$):
\begin{equation*}
\begin{aligned}
Y^{i,n}(t)&=\zeta(t)+\int_{t}^{T} f(t, s, Y^{i,n}(s), Z^{i,n}(t, s), Z^{i,n}(s, t), \frac{1}{n}\sum_{i=1}^{n} Y^{i,n}(s), \frac{1}{n}\sum_{i=1}^{n} Z^{i,n} (t, s),\\
&~~~ \frac{1}{n}\sum_{i=1}^{n} Z^{i,n}(s, t))ds
+\int_{t}^{T} g(t, s, Y^{i,n}(s), Z^{i,n}(t, s), Z^{i,n}(s, t), \frac{1}{n}\sum_{i=1}^{n} Y^{i,n}(s),\\
&~~~ \frac{1}{n}\sum_{i=1}^{n} Z^{i,n}(t, s), \frac{1}{n}\sum_{i=1}^{n} Z^{i,n}(s, t)) d\overleftarrow{B}^{i}(s)
-\int_{t}^{T} Z^{i,n}(t, s) d\overrightarrow{W}^{i}(s),~~~~t\in[0, T],
\end{aligned}
\end{equation*}
where $\{W^i (\cdot)\}_{i\geq 1}$ and $\{B^i (\cdot)\}_{i\geq 1}$
are the families of mutually independent Brownian motions.
Our target is to establish the well-posedness, the comparison theorem,
the regularity result for M-solutions of the above MF-BDSVIE, and
consider their application to the dynamic risk measure.
The innovation points of the present paper are summarized as follows.
First, we introduce the mean-field strategy into the BDSVIEs such that
both the BDSVIEs studied in
\cite{Shi2020} and the mean-field BSVIEs given in \cite{Shi2013}
become a special case of the MF-BDSVIEs considered in our paper.
Moreover, because the It\^{o} formula is missing, by the definition
of M-solutions and fixed point theorems we prove the existence
and uniqueness as well as the comparison theorem for the MF-BDSVIEs.
Our method of proving the well-posedness of MF-BDSVIEs is much simpler
than the four-step method in \cite{Yong2008}.
In addition, we present the differentiability of the M-solution to the MF-BDSVIEs
by virtue of Malliavin calculus, making up for the lack of research
on regularity results in \cite{Shi2020,Wu2023}.
Finally, as an application of the comparison theorem, some properties of dynamic risk measures
governed by MF-BDSVIEs are studied,
which extend the area of applications of the MF-BDSVIEs.

The remainder of the paper is organized as follows. In Section \ref{sec2}, we
introduce some necessary notations.
In Sections \ref{sec3}-\ref{sec4}, we prove the well-posedness
and the comparison theorem of MF-BDSVIEs, respectively.
In Section \ref{sec5}, the regularity result for the solution of MF-BDSVIEs is given.
Finally, an application to the dynamic risk measure driven by MF-BDSVIEs
is presented in Section \ref{sec6}.

\section{Preliminaries}\label{sec2}
In this section, let us introduce some basis notations to be used throughout this paper.

Let $(\Omega, \mathcal{F},
\mathbb{P})$ be a complete probability space,
and $T>0$ be a given final time.
Let $\{W(t)\}_{0\leq t \leq T}$ and $\{B(t)\}_{0\leq t \leq T}$
be two mutually independent standard Brownian motions defined on
$(\Omega, \mathcal{F}, \mathbb{P})$ with values in
$\mathbb{R}^{m}$ and in $\mathbb{R}^{k}$, respectively.
Denote the class of $\mathbb{P}$-null sets of $\mathcal{F}$ by $\mathcal{N}$.
We assume that $\mathbb{F}:=\{\mathcal{F}_t ~|~ t\in [0, T] \}$ is defined by
\begin{equation*}
\begin{aligned}
\mathcal{F}_t := \mathcal{F}_t^W \vee \mathcal{F}_{t,T}^B,
\end{aligned}
\end{equation*}
where for any process $\{ \lambda (\cdot)\}$,
$\mathcal{F}_{s,t}^\lambda := \sigma\{\lambda(r)-\lambda(s); s\leq r \leq t\}\vee\mathcal{N}$ and
$\mathcal{F}_{t}^\lambda := \mathcal{F}_{0,t}^\lambda$.
It is worth noting that $\mathbb{F}:=\{\mathcal{F}_t ~|~ t\in [0, T] \}$ is neither increasing nor
decreasing, so it is not a filtration.
Suppose that
\begin{equation*}
\begin{aligned}
\Delta=\{(t,s)\in[0, T]^2~|~t\leq s\}~~and~~\Delta^c=\{(t,s)\in[0, T]^2~|~t> s\}.
\end{aligned}
\end{equation*}
We also denote by $|x|$ the Euclidean norm of a vector $x\in \mathbb{R}^{l}$,
by $\parallel A \parallel=\sqrt{Tr AA^{T}}$ ($A^{T}$ is the transpose of $A$) the norm of a $k\times r$ matrix $A$,
by $\mathbb{S}$ the Euclidean space, and by $\mathcal{B}([a, b])$ the Borel $\sigma$-field on $[a, b]$. Furthermore, we introduce the following spaces:
\begin{equation*}
\begin{aligned}
&L^2(0,T; \mathbb{S})=\left\{\xi: [0,T]\mapsto\mathbb{S}~|~\xi(\cdot)~\mbox{is}~\mathcal{B}([0, T])\mbox{-measurable},
~\int_0^T |\xi(t)|^2 dt<\infty\right\}, \\
\end{aligned}
\end{equation*}
\begin{equation*}
\begin{aligned}
&L_{\mathcal{F}_t}^{2} (\Omega; \mathbb{S})=\left\{\xi: \Omega\mapsto\mathbb{S}~|~\xi~\mbox{is} ~\mathcal{F}_t\mbox{-measurable},
~\mathbb{E}\left[|\xi|^2\right]<\infty\right\},\\
&L_{\mathcal{F}_T}^{2} (0,T; \mathbb{S})=\left\{\nu: \Omega\times[0,T]\mapsto\mathbb{S}~|~\nu(\cdot) ~\mbox{is}~\mathcal{F}_T\mbox{-measurable},
~\mathbb{E}\left[\int_{0}^{T}|\nu(t)|^2 dt\right]<\infty\right\},\\
&L_{\mathbb{F}}^{2} (0,T; \mathbb{S})=\left\{\nu: \Omega\times[0,T]\mapsto\mathbb{S}~|~\nu(t)~ \mbox{is}~\mathcal{F}_t\mbox{-measurable for all}~t\in[0, T], \right.\\
&\left.~~~~~~~~~~~~~~~~~~\mathbb{E}\left[\int_{0}^{T}|\nu(t)|^2 dt\right]<\infty\right\},\\
&L_{\mathbb{F}}^{2} (\Delta; \mathbb{S})=\left\{\lambda: \Omega\times\Delta\mapsto\mathbb{S}~|~\mbox{for any}~t\in [0, T], ~ \lambda(t,s)~\mbox{is}~\mathcal{F}_s\mbox{-measurable for all}~s\in[t, T],\right.\\
&~~~~~~~~~~~~~~~~\left.\mathbb{E}\left[\int_{0}^{T}\int_{t}^{T}|\lambda(t,s)|^2 dsdt\right]<\infty\right\},\\
&L_{\mathbb{F}}^{2} ([0,T]^2; \mathbb{S})=\left\{\lambda: \Omega\times[0,T]\times[0,T]\mapsto\mathbb{S}~|~\mbox{for any}~t\in [0, T], ~ \lambda(t,s)~\mbox{is}~\mathcal{F}_s\mbox{-measurable for all}\right.\\
&~~~~~~~~~~~~~~~~~~~~~\left.s\in[0, T],~\mathbb{E}\left[\int_{0}^{T}\int_{0}^{T}|\lambda(t,s)|^2 dsdt\right]<\infty\right\},\\
&L^{\infty} ([0,T]; L_{\mathcal{F}_T}^{2} (\Omega; \mathbb{S}))=\left\{\nu: \Omega\times[0, T]\mapsto\mathbb{S}~|~\mbox{for any}~t\in[0, T],~ \nu(t)\in L_{\mathcal{F}_T}^{2} (\Omega; \mathbb{S})~\mbox{such that}\right.\\
&\left.~~~~~~~~~~~~~~~~~~~~~~~~~~~~~~
~\sup\limits_{0\leq t \leq T} \mathbb{E}\left[|\nu(t)|^2\right] <\infty\right\}.
\end{aligned}
\end{equation*}

\section{Well-posedness of MF-BDSVIEs}\label{sec3}

In this section, we pay attention to the existence and uniqueness of the solution to a MF-BDSVIE.
More precisely, we consider the following equation:
\begin{equation}\label{3}
\begin{aligned}
Y(t)&=\zeta(t)+\int_{t}^{T} f(t, s, Y(s), Z(t, s), Z(s, t), \mathbb{E}[Y(s)], \mathbb{E}[Z(t, s)], \mathbb{E}[Z(s, t)])ds\\
&~~~+\int_{t}^{T} g(t, s, Y(s), Z(t, s), Z(s, t), \mathbb{E}[Y(s)], \mathbb{E}[Z(t, s)], \mathbb{E}[Z(s, t)]) d\overleftarrow{B}(s)\\
&~~~-\int_{t}^{T} Z(t, s) d\overrightarrow{W}(s), ~~~~t\in[0, T],
\end{aligned}
\end{equation}
where
$\zeta: \Omega\times [0, T] \mapsto \mathbb{R}^n$
is an $\mathcal{F}_T$-measurable map.
$f: \Omega\times \Delta \times (\mathbb{R}^n \times\mathbb{R}^{n\times m}\times\mathbb{R}^{n\times m})^2
\mapsto\mathbb{R}^n$ and
$g: \Omega\times \Delta \times (\mathbb{R}^n \times\mathbb{R}^{n\times m}\times\mathbb{R}^{n\times m})^2
\mapsto \mathbb{R}^{n\times k}$ are given jointly measurable maps, and have
\begin{itemize}
\item[(A1)] For any $(t,y,z,\varsigma,\overline{y},\overline{z},\overline{\varsigma})\in [0,T]\times(\mathbb{R}^n \times\mathbb{R}^{n\times
m}\times\mathbb{R}^{n\times m})^2$, the maps $s\mapsto f(t,s,y,z,\varsigma,\overline{y},\overline{z},\overline{\varsigma})$
and $s\mapsto g(t,s,y,z,\varsigma,\overline{y},\overline{z},\overline{\varsigma})$ are $\mathcal{F}_s$-measurable.
\item[(A2)]
For any $(y,z,\varsigma,\overline{y},\overline{z},\overline{\varsigma})\in (\mathbb{R}^n \times\mathbb{R}^{n\times
m}\times\mathbb{R}^{n\times m})^2$, $f(\cdot,\cdot,y,z,\varsigma,\overline{y},\overline{z},\overline{\varsigma})\in L_{\mathbb{F}}^{2} (\Delta; \mathbb{R}^n)$
and $g(\cdot,\cdot,y,z,\varsigma,\overline{y},\overline{z},\overline{\varsigma})\in L_{\mathbb{F}}^{2} (\Delta; \mathbb{R}^{n\times k})$.
\item[(A3)]
For any $(y_1,z_1,\varsigma_1,\overline{y}_1,\overline{z}_1,\overline{\varsigma}_1),
(y_2,z_2,\varsigma_2,\overline{y}_2,\overline{z}_2,\overline{\varsigma}_2)\in(\mathbb{R}^n \times\mathbb{R}^{n\times m}\times\mathbb{R}^{n\times m})^2$ and $(t,s)\in\Delta$, there exist some constants $c>0$ and $0<\alpha<\frac{1}{2(T+2)}$ such that
\begin{align*}
&~~~|f(t,s,y_1,z_1,\varsigma_1,\overline{y}_1,\overline{z}_1,\overline{\varsigma}_1)
-f(t,s,y_2,z_2,\varsigma_2,\overline{y}_2,\overline{z}_2,\overline{\varsigma}_2)|^2\\
&\leq c(|y_1-y_2|^2+\|z_1-z_2\|^2+\|\varsigma_1-\varsigma_2\|^2+|\overline{y}_1-\overline{y}_2|^2
+\|\overline{z}_1-\overline{z}_2\|^2+\|\overline{\varsigma}_1-\overline{\varsigma}_2\|^2);\\
&~~~\|g(t,s,y_1,z_1,\varsigma_1,\overline{y}_1,\overline{z}_1,\overline{\varsigma}_1)
-g(t,s,y_2,z_2,\varsigma_2,\overline{y}_2,\overline{z}_2,\overline{\varsigma}_2)\|^2\\
&\leq \alpha(|y_1-y_2|^2+\|z_1-z_2\|^2+\|\varsigma_1-\varsigma_2\|^2+|\overline{y}_1-\overline{y}_2|^2
+\|\overline{z}_1-\overline{z}_2\|^2+\|\overline{\varsigma}_1-\overline{\varsigma}_2\|^2).
\end{align*}
\end{itemize}

\begin{definition}\label{def1}
For all $S\in [0,T]$, a pair of processes $(Y(\cdot),Z(\cdot,\cdot))\in L_{\mathbb{F}}^{2} (0,T; \mathbb{R}^n)\times L_{\mathbb{F}}^{2} ([0,T]^2;$ $\mathbb{R}^{n\times m})$ is called an M-solution of MF-BDSVIE (\ref{3}) if it satisfies
Eq. (\ref{3}) in the usual It\^{o}'s sense and the following equation holds:
\begin{equation}\label{4}
\begin{aligned}
Y(t)=\mathbb{E}[Y(s)|\mathcal{F}_S]+\int_S^t Z(t,s) d\overrightarrow{W}(s),~~~a.e.~t\in [S,T].
\end{aligned}
\end{equation}
\end{definition}

Subsequently, for each $\beta>0$, we define $\mathcal{L}_\beta^2 [0,T]$ to be the Banach space
\begin{equation*}
\begin{aligned}
\mathcal{L}_\beta^2 [0,T]=L_{\mathbb{F}}^{2} (0,T; \mathbb{R}^n)\times L_{\mathbb{F}}^{2} ([0,T]^2; \mathbb{R}^{n\times m}),
\end{aligned}
\end{equation*}
equipped with the norm
\begin{equation*}
\begin{aligned}
\|(Y(\cdot),Z(\cdot,\cdot))\|_{\mathcal{L}_\beta^2 [0,T]}:=
\left\{\mathbb{E}\left[\int_0^T e^{\beta t} |Y(t)|^2 dt+\int_0^T \int_0^T  e^{\beta s}\|Z(t,s)\|^2 dsdt\right]\right\}^{1/2}.
\end{aligned}
\end{equation*}

Moreover, for any $\beta>0$, we define $\mathcal{M}_\beta^2 [0,T]$ to be the set of all pairs $(Y(\cdot),Z(\cdot,\cdot))\in
\mathcal{L}_\beta^2 [0,T]$ such that (\ref{4}) is satisfied. Under (\ref{4}), for any $(Y(\cdot),Z(\cdot,\cdot))\in
\mathcal{M}_\beta^2 [0,T]$, we obtain
\begin{equation*}
\begin{aligned}
&~~~\mathbb{E}\left[\int_0^T e^{\beta t} |Y(t)|^2 dt+\int_0^T \int_0^T  e^{\beta s}\|Z(t,s)\|^2 dsdt\right]\\
&\leq\mathbb{E}\left[\int_0^T e^{\beta t} |Y(t)|^2 dt+\int_0^T \int_t^T  e^{\beta s}\|Z(t,s)\|^2 dsdt
+\int_0^T e^{\beta t}\int_0^t  \|Z(t,s)\|^2 dsdt\right]\\
&=\mathbb{E}\left[\int_0^T e^{\beta t} |Y(t)|^2 dt+\int_0^T \int_t^T  e^{\beta s}\|Z(t,s)\|^2 dsdt\right]
+\mathbb{E}\left[\int_0^T e^{\beta t}\left(\int_0^t Z(t,s) d\overrightarrow{W}(s)\right)^2 dt\right] \\
&=\mathbb{E}\left[\int_0^T e^{\beta t} |Y(t)|^2 dt+\int_0^T \int_t^T  e^{\beta s}\|Z(t,s)\|^2 dsdt\right]
+\mathbb{E}\left[\int_0^T e^{\beta t}\left(Y(t)-\mathbb{E}[Y(s)]\right)^2 dt\right]\\
&\leq2\mathbb{E}\left[\int_0^T e^{\beta t} |Y(t)|^2 dt+\int_0^T \int_t^T  e^{\beta s}\|Z(t,s)\|^2 dsdt\right],
\end{aligned}
\end{equation*}
which means that we can adopt an equivalent norm in $\mathcal{M}_\beta^2 [0,T]$:
\begin{align*}
\|(Y(\cdot),Z(\cdot,\cdot))\|_{\mathcal{M}_\beta^2 [0,T]}:=
\left\{\mathbb{E}\left[\int_0^T e^{\beta t} |Y(t)|^2 dt+\int_0^T \int_t^T  e^{\beta s}\|Z(t,s)\|^2 dsdt\right]\right\}^{1/2}.
\end{align*}

Now we prove the existence and uniqueness of the M-solution to MF-BDSVIE (\ref{3}).

\begin{theorem}\label{th1}
If $(A1)$-$(A3)$ hold and $\zeta(\cdot)\in L_{\mathcal{F}_T}^{2} (0,T; \mathbb{R}^n)$, then there exists a unique M-solution $(Y(\cdot), Z(\cdot,\cdot))\in \mathcal{M}_\beta^2 [0,T]~(\beta>0)$
of MF-BDSVIE (\ref{3}). Moreover, there exists a constant $L>0$ such that
\begin{equation}\label{30}
\begin{aligned}
&~~~\|(Y(\cdot),Z(\cdot,\cdot))\|_{\mathcal{M}_\beta^2 [0,T]}^2 \\
&\leq L \mathbb{E}\left[\int_0^T e^{\beta t}|\zeta(t)|^2 dt+\int_0^T \int_t^T  e^{\beta s} |f_0 (t,s)|^2 dsdt +\int_0^T \int_t^T  e^{\beta s}\|g_0 (t,s)\|^2 dsdt \right],
\end{aligned}
\end{equation}
where $f_0 (t,s)=f (t,s,0,0,0,0,0,0)$ and $g_0 (t,s)=g (t,s,0,0,0,0,0,0)$.\\
For $i=1,2$, if $f_i$ and $g_i$ satisfy $(A1)$-$(A3)$, $\zeta_i (\cdot)\in L_{\mathcal{F}_T}^{2} (0,T; \mathbb{R}^n)$,
and $(Y_i (\cdot), Z_i (\cdot,\cdot))\in \mathcal{M}_\beta^2 [0,T]$ is the unique M-solution of MF-BDSVIE (\ref{3})
corresponding to $f_i$, $g_i$ and $\zeta_i$, then
\begin{equation}\label{32}
\begin{aligned}
&~~~\mathbb{E}\left[\int_0^T e^{\beta t} |Y_1 (t)-Y_2 (t)|^2 dt+\int_0^T \int_t^T  e^{\beta s}\|Z_1 (t,s)-Z_2 (t,s)\|^2 dsdt\right] \\
&\leq L \mathbb{E}\left[\int_0^T e^{\beta t}|\zeta_1 (t)-\zeta_2 (t)|^2 dt
+\int_0^T \int_t^T  e^{\beta s} |\delta f (t,s)|^2 dsdt
+\int_0^T \int_t^T  e^{\beta s}\|\delta g (t,s)\|^2 dsdt \right],
\end{aligned}
\end{equation}
where
\begin{equation*}
\begin{aligned}
\delta f(t,s)&=f_1 (t,s,Y_2(s), Z_2(t, s), Z_2(s, t), \mathbb{E}[Y_2(s)], \mathbb{E}[Z_2(t, s)], \mathbb{E}[Z_2(s, t)])\\
&~~~-f_2 (t,s,Y_2(s), Z_2(t, s), Z_2(s, t), \mathbb{E}[Y_2(s)], \mathbb{E}[Z_2(t, s)], \mathbb{E}[Z_2(s, t)]),\\
\delta g(t,s)&=g_1 (t,s,Y_2(s), Z_2(t, s), Z_2(s, t), \mathbb{E}[Y_2(s)], \mathbb{E}[Z_2(t, s)], \mathbb{E}[Z_2(s, t)])\\
&~~~-g_2 (t,s,Y_2(s), Z_2(t, s), Z_2(s, t), \mathbb{E}[Y_2(s)], \mathbb{E}[Z_2(t, s)], \mathbb{E}[Z_2(s, t)]).
\end{aligned}
\end{equation*}
\end{theorem}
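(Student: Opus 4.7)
My strategy is a Banach fixed-point argument on the space $\mathcal{M}_\beta^2 [0,T]$ endowed with the equivalent weighted norm $\|\cdot\|_{\mathcal{M}_\beta^2 [0,T]}$, with the parameter $\beta>0$ to be chosen sufficiently large at the end. Given any input $(y(\cdot),z(\cdot,\cdot))\in\mathcal{M}_\beta^2 [0,T]$, I first freeze the coefficients and set, for each $t\in[0,T]$,
\begin{equation*}
\psi(t)=\zeta(t)+\int_{t}^{T}\!f\bigl(t,s,y(s),z(t,s),z(s,t),\mathbb{E}[y(s)],\mathbb{E}[z(t,s)],\mathbb{E}[z(s,t)]\bigr)ds+\int_{t}^{T}\!g(\cdots)\,d\overleftarrow{B}(s),
\end{equation*}
where $g(\cdots)$ denotes $g$ evaluated at the same arguments as $f$. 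By (A1)--(A3), $\psi(t)$ is square-integrable and measurable with respect to $\mathcal{F}_{T}^{W}\vee\mathcal{F}_{t,T}^{B}$. Conditioning on $\mathcal{F}_{t,T}^{B}$, which is independent of $\mathcal{F}_{T}^{W}$, and applying the classical Brownian martingale representation in the $W$-filtration produces an $\mathcal{F}_{t}$-measurable $Y(t)$ together with a process $Z(t,\cdot)\in L^{2}_{\mathbb{F}}(t,T;\mathbb{R}^{n\times m})$ satisfying $\psi(t)=Y(t)+\int_{t}^{T}Z(t,s)\,d\overrightarrow{W}(s)$. I then extend $Z(t,\cdot)$ onto $s\in[0,t]$ by a second martingale representation applied to $Y(t)-\mathbb{E}[Y(t)]$, so that the M-solution identity (\ref{4}) with $S=0$ holds by construction. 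The assignment $\Theta(y,z):=(Y,Z)$ therefore maps $\mathcal{M}_\beta^2 [0,T]$ into itself.

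The heart of the argument is to show that $\Theta$ is a strict contraction. Take two inputs $(y_{i},z_{i})$ with images $(Y_{i},Z_{i})$ and write $\Delta y=y_{1}-y_{2}$, $\Delta Y=Y_{1}-Y_{2}$, and so on, denoting by $\Delta f,\Delta g$ the corresponding coefficient differences. Differencing the defining identities yields
\begin{equation*}
\Delta Y(t)+\int_{t}^{T}\!\Delta Z(t,s)\,d\overrightarrow{W}(s)=\int_{t}^{T}\!\Delta f(t,s)\,ds+\int_{t}^{T}\!\Delta g(t,s)\,d\overleftarrow{B}(s).
\end{equation*}
Applying the forward It\^o isometry, exploiting the independence of the forward $W$- and backward $B$-integrals, using Cauchy--Schwarz on the $ds$-piece, the Lipschitz bound (A3), and Jensen's inequality $|\mathbb{E}[\cdot]|^{2}\leq\mathbb{E}[|\cdot|^{2}]$ to absorb the mean-field arguments, produces a pointwise-in-$t$ estimate of the form
\begin{equation*}
\mathbb{E}|\Delta Y(t)|^{2}+\mathbb{E}\!\int_{t}^{T}\!\|\Delta Z(t,s)\|^{2}ds\leq\kappa\bigl(c(T-t)+\alpha\bigr)\mathbb{E}\!\int_{t}^{T}\!\Phi(t,s)\,ds,
\end{equation*}
with $\Phi(t,s)=|\Delta y(s)|^{2}+\|\Delta z(t,s)\|^{2}+\|\Delta z(s,t)\|^{2}$ and $\kappa$ an absolute constant. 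Multiplying by $e^{\beta t}$, integrating over $t\in[0,T]$, using Fubini to swap the order of integration on the $\|\Delta z(s,t)\|^{2}$-term, and invoking (\ref{4}) to bound $\mathbb{E}\!\int_{0}^{u}\|\Delta z(u,v)\|^{2}dv$ by $\mathbb{E}|\Delta y(u)|^{2}$ after the swap, one arrives at an inequality of the shape
\begin{equation*}
\|(\Delta Y,\Delta Z)\|_{\mathcal{M}_\beta^2 [0,T]}^{2}\leq\Bigl[\tfrac{C}{\beta}+2(T+2)\alpha\Bigr]\|(\Delta y,\Delta z)\|_{\mathcal{M}_\beta^2 [0,T]}^{2}.
\end{equation*}
The hypothesis $\alpha<\frac{1}{2(T+2)}$ renders the second bracket strictly less than $1$, and choosing $\beta$ sufficiently large forces the whole coefficient below $1$; Banach's fixed-point theorem then delivers the unique M-solution.

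The main delicate point is precisely the $2(T+2)\alpha$ term, which is \emph{independent} of $\beta$: the backward It\^o isometry contributes $\mathbb{E}\!\int_{t}^{T}\!\|\Delta g\|^{2}ds$, and after the $e^{\beta t}$-weighting and Fubini rearrangement this produces an $\alpha$-multiple of the input $\mathcal{M}_\beta^2$-norm with no discount from $\beta$; enlarging $\beta$ cannot defeat it, which is why the Lipschitz constant of $g$ must satisfy the a priori smallness condition from the outset. A secondary technical obstacle is to check that the piecewise-defined $Z(t,s)$ is jointly measurable in $(t,s)$ and lies in $L^{2}_{\mathbb{F}}([0,T]^{2};\mathbb{R}^{n\times m})$ with the correct adaptedness in the first argument, which one obtains by first running the construction for step-process data $\zeta,f,g$ and then passing to the $L^{2}$-limit. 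Estimate (\ref{30}) is then read off by comparing the fixed point $(Y,Z)$ with $\Theta(0,0)$ in the contraction inequality, so that $(1-L)\|(Y,Z)\|_{\mathcal{M}_\beta^2 [0,T]}\leq\|\Theta(0,0)\|_{\mathcal{M}_\beta^2 [0,T]}$, the right-hand side being controlled by $\zeta$, $f_{0}$ and $g_{0}$ via It\^o isometry and Cauchy--Schwarz; estimate (\ref{32}) follows by running the same computation on the difference of the two M-solutions, with $\zeta_{1}-\zeta_{2}$, $\delta f$ and $\delta g$ playing the role of inhomogeneous sources.
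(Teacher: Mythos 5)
Your proposal follows essentially the same route as the paper: freeze the coefficients, solve the resulting auxiliary equation, extend $Z(t,\cdot)$ to $\Delta^c$ via the M-solution identity (\ref{4}), and run a Banach fixed-point argument in the weighted norm of $\mathcal{M}_\beta^2[0,T]$, with the $\beta$-resistant contribution $2\alpha(T+2)$ from the backward integral controlled by the standing assumption $\alpha<\tfrac{1}{2(T+2)}$ — exactly the mechanism the paper uses, including reading off (\ref{30}) and (\ref{32}) from the same contraction estimate. The only cosmetic difference is that where the paper cites Lemma 2.5 and estimate (2.2) of Shi--Wen--Xiong for the frozen-coefficient equation and the a priori bound, you re-derive them via martingale representation in the enlarged filtration and the It\^{o} isometries; just make sure your intermediate ``absolute constant $\kappa$'' does not silently inflate the coefficient of $\alpha$ beyond $2(T+2)$, since that sharp constant is what the hypothesis on $\alpha$ is calibrated to.
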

\begin{proof}
For any $(y(\cdot), z(\cdot,\cdot))\in \mathcal{M}_\beta^2 [0,T]$, we investigate
the equation of the following form:
\begin{equation}\label{5}
\begin{aligned}
Y(t)&=\zeta(t)+\int_{t}^{T} f(t, s, y(s), z(t, s), z(s, t), \mathbb{E}[y(s)], \mathbb{E}[z(t, s)], \mathbb{E}[z(s, t)])ds\\ &~~~+\int_{t}^{T} g(t, s, y(s), z(t, s), z(s, t), \mathbb{E}[y(s)], \mathbb{E}[z(t, s)], \mathbb{E}[z(s, t)]) d\overleftarrow{B}(s)       \\
&~~~-\int_{t}^{T} Z(t, s) d\overrightarrow{W}(s),~t\in[0, T].
\end{aligned}
\end{equation}
By Lemma 2.5 in \cite{Shi2020} we can deduce that BDSVIE (\ref{5}) admits a unique solution $(Y(\cdot), Z(\cdot, \cdot))$ $\in L_{\mathbb{F}}^{2} (0,T; \mathbb{R}^n)\times L_{\mathbb{F}}^{2} (\Delta; \mathbb{R}^{n\times m})$.
Suppose that $Z(\cdot,\cdot)$ satisfies the relation (\ref{4}) on $\Delta^c$. Then it is obvious that $(Y(\cdot),Z(\cdot,\cdot))\in \mathcal{M}_\beta^2 [0,T]$ is an M-solution of Eq. (\ref{5}).
Hence, for each $(y(\cdot), z(\cdot,\cdot))\in \mathcal{M}_\beta^2 [0,T]$, we construct a map
\begin{equation}\label{31}
\begin{aligned}
\Gamma(y(\cdot), z(\cdot,\cdot)):=(Y(\cdot), Z(\cdot,\cdot))\in\mathcal{M}_\beta^2 [0,T].
\end{aligned}
\end{equation}
Hereafter, we only need to prove $\Gamma$ is a contractive map.

For $i=1,2$, take any $(y_i(\cdot), z_i(\cdot,\cdot))\in \mathcal{M}_\beta^2 [0,T]$,
we have $(Y_i(\cdot), Z_i(\cdot,\cdot))=\Gamma(y_i(\cdot), z_i(\cdot,\cdot))$. Denote
\begin{align*}
(\widehat{y}(\cdot), \widehat{z}(\cdot,\cdot))=(y_1(\cdot)-y_2(\cdot), z_1(\cdot,\cdot)-z_2(\cdot,\cdot)),~
(\widehat{Y}(\cdot), \widehat{Z}(\cdot,\cdot))=(Y_1(\cdot)-Y_2(\cdot), Z_1(\cdot,\cdot)-Z_2(\cdot,\cdot)).
\end{align*}
Making use of the estimate (2.2) in \cite{Shi2020} yields that
\begin{equation}\label{6}
\begin{aligned}
&~~~~\mathbb{E}\left[\int_0^T e^{\beta t} |\widehat{Y}(t)|^2 dt+\int_0^T \int_t^T  e^{\beta s}\|\widehat{Z}(t,s)\|^2 dsdt\right]\\
&\leq\frac{10}{\beta} \mathbb{E}\left[\int_0^T \int_t^T e^{\beta s}\left|f(t,s,y_1(s),z_1(t,s),z_1(s,t),\mathbb{E}[y_1(s)],\mathbb{E}[z_1(t,s)],\mathbb{E}[z_1(s,t)])\right.\right.\\
&~~~\left.\left.-f(t,s,y_2(s),z_2(t,s),z_2(s,t),\mathbb{E}[y_2(s)],\mathbb{E}[z_2(t,s)],\mathbb{E}[z_2(s,t)])\right|^2 dsdt\right]\\
&~~~+\mathbb{E}\left[\int_0^T  \int_t^T e^{\beta s} \left\|g(t,s,y_1(s),z_1(t,s),z_1(s,t),\mathbb{E}[y_1(s)],\mathbb{E}[z_1(t,s)],\mathbb{E}[z_1(s,t)])\right.\right.\\
&~~~\left.\left.-g(t,s,y_2(s),z_2(t,s),z_2(s,t),\mathbb{E}[y_2(s)],\mathbb{E}[z_2(t,s)],\mathbb{E}[z_2(s,t)])\right\|^{2} dsdt\right]\\
&~~~+\mathbb{E}\left[\int_0^T e^{\beta t} \int_t^T  \left\|g(t,s,y_1(s),z_1(t,s),z_1(s,t),\mathbb{E}[y_1(s)],\mathbb{E}[z_1(t,s)],\mathbb{E}[z_1(s,t)])\right.\right.\\
&~~~\left.\left.-g(t,s,y_2(s),z_2(t,s),z_2(s,t),\mathbb{E}[y_2(s)],\mathbb{E}[z_2(t,s)],\mathbb{E}[z_2(s,t)])\right\|^{2} dsdt\right].
\end{aligned}
\end{equation}
With the help of (\ref{4}), it follows that
\begin{equation}\label{7}
\begin{aligned}
&\mathbb{E}\left[\int_0^T \int_t^T e^{\beta s} |\widehat{y}(s)|^2 dsdt\right]
\leq T \mathbb{E}\left[\int_0^T  e^{\beta t} |\widehat{y}(t)|^2 dt\right],\\
&\mathbb{E}\left[\int_0^T \int_t^T e^{\beta s} \|\widehat{z}(s,t)\|^2 dsdt\right]
=\mathbb{E}\left[\int_0^T \int_0^s e^{\beta s} \|\widehat{z}(s,t)\|^2 dtds\right]
\\
&=\mathbb{E}\left[\int_0^T \int_0^t e^{\beta t} \|\widehat{z}(t,s)\|^2 dsdt\right]
=\mathbb{E}\left[\int_0^T e^{\beta t} \left(\int_0^t \|\widehat{z}(t,s)\|^2 d\overrightarrow{W}(s)\right)^2 dt\right] \\
&=\mathbb{E}\left[\int_0^T e^{\beta t} \left|\widehat{y}(t)-\mathbb{E}[\widehat{y}(t)]\right|^2 dt\right]
\leq\mathbb{E}\left[\int_0^T e^{\beta t} |\widehat{y}(t)|^2 dt\right],
\end{aligned}
\end{equation}
\begin{equation}\label{53}
\begin{aligned} &\mathbb{E}\left[\int_0^T \int_t^T e^{\beta s} |\mathbb{E}[\widehat{y}(s)]|^2 dsdt\right]
\leq T \mathbb{E}\left[\int_0^T  e^{\beta t} |\mathbb{E}[\widehat{y}(t)]|^2 dt\right],\\
&\mathbb{E}\left[\int_0^T \int_t^T e^{\beta s} \|\mathbb{E}[\widehat{z}(s,t)]\|^2 dsdt\right]
\leq\mathbb{E}\left[\int_0^T e^{\beta t} |\mathbb{E}[\widehat{y}(t)]|^2 dt\right].
\end{aligned}
\end{equation}
It then follows from $(A3)$, (\ref{7})-(\ref{53}) and Jensen's inequality that
\begin{equation}\label{8}
\begin{aligned}
&~~~\mathbb{E}\left[\int_0^T \int_t^T e^{\beta s}\left|f(t,s,y_1(s),z_1(t,s),z_1(s,t),\mathbb{E}[y_1(s)],\mathbb{E}[z_1(t,s)],\mathbb{E}[z_1(s,t)])\right.\right.\\
&~~~\left.\left.-f(t,s,y_2(s),z_2(t,s),z_2(s,t),\mathbb{E}[y_2(s)],\mathbb{E}[z_2(t,s)],\mathbb{E}[z_2(s,t)])\right|^2 dsdt\right]\\
&\leq c \mathbb{E}\left[\int_0^T \int_t^T e^{\beta s} \left( |\widehat{y}(s)|^2+ \|\widehat{z}(t,s)\|^2+\|\widehat{z}(s,t)\|^2
+\|\mathbb{E}[\widehat{y}(s)]\|^2+ \|\mathbb{E}[\widehat{z}(t,s)]\|^2\right.\right.\\
&~~~\left.\left.+\|\mathbb{E}[\widehat{z}(s,t)]\|^2 \right)    dsdt\right]\\
&\leq 2c(T+1)\mathbb{E}\left[\int_0^T e^{\beta t} |\widehat{y}(t)|^2 dt\right]+2c\mathbb{E}\left[\int_0^T \int_t^T e^{\beta s} \|\widehat{z}(t,s)\|^2 dsdt\right].
\end{aligned}
\end{equation}
Similar to (\ref{8}), we obtain
\begin{equation}\label{9}
\begin{aligned}
&~~~\mathbb{E}\left[\int_0^T  \int_t^T e^{\beta s} \left\|g(t,s,y_1(s),z_1(t,s),z_1(s,t),\mathbb{E}[y_1(s)],\mathbb{E}[z_1(t,s)],\mathbb{E}[z_1(s,t)])\right.\right.\\
&~~~\left.\left.-g(t,s,y_2(s),z_2(t,s),z_2(s,t),\mathbb{E}[y_2(s)],\mathbb{E}[z_2(t,s)],\mathbb{E}[z_2(s,t)])\right\|^{2} dsdt\right]\\
&\leq \alpha \mathbb{E}\left[\int_0^T \int_t^T e^{\beta s} \left( |\widehat{y}(s)|^2+ \|\widehat{z}(t,s)\|^2+\|\widehat{z}(s,t)\|^2
+\|\mathbb{E}[\widehat{y}(s)]\|^2+ \|\mathbb{E}[\widehat{z}(t,s)]\|^2\right.\right.\\
&~~~\left.\left.+\|\mathbb{E}[\widehat{z}(s,t)]\|^2 \right)    dsdt\right]\\
&\leq 2\alpha(T+1)\mathbb{E}\left[\int_0^T e^{\beta t} |\widehat{y}(t)|^2 dt\right]+2\alpha\mathbb{E}\left[\int_0^T \int_t^T e^{\beta s} \|\widehat{z}(t,s)\|^2 dsdt\right].
\end{aligned}
\end{equation}
Noting the following inequalities:
\begin{equation*}
\begin{aligned}
&\mathbb{E}\left[\int_0^T e^{\beta t} \int_t^T |\widehat{y}(s)|^2 dsdt\right]
=\frac{1}{\beta}\mathbb{E}\left[\int_0^T  \left(\int_t^T |\widehat{y}(s)|^2 ds\right)de^{\beta t}\right]\\
&=\frac{1}{\beta}\mathbb{E}\left[\int_0^T e^{\beta t} |\widehat{y}(t)|^2 dt\right]-\frac{1}{\beta}\mathbb{E}\left[\int_0^T |\widehat{y}(t)|^2 dt\right]
\leq\frac{1}{\beta}\mathbb{E}\left[\int_0^T e^{\beta t} |\widehat{y}(t)|^2 dt\right],\\
&\mathbb{E}\left[\int_0^T e^{\beta t}\int_t^T  \|\widehat{z}(s,t)\|^2 dsdt\right]
\leq\mathbb{E}\left[\int_0^T \int_t^T e^{\beta s} \|\widehat{z}(s,t)\|^2 dsdt\right]
\leq\mathbb{E}\left[\int_0^T e^{\beta t} |\widehat{y}(t)|^2 dt\right],\\
&\mathbb{E}\left[\int_0^T e^{\beta t} \int_t^T |\mathbb{E}[\widehat{y}(s)]|^2 dsdt\right]
\leq\frac{1}{\beta}\mathbb{E}\left[\int_0^T e^{\beta t} |\mathbb{E}[\widehat{y}(t)]|^2 dt\right],\\
&\mathbb{E}\left[\int_0^T e^{\beta t}\int_t^T  \|\mathbb{E}[\widehat{z}(s,t)]\|^2 dsdt\right]
\leq\mathbb{E}\left[\int_0^T e^{\beta t} |\mathbb{E}[\widehat{y}(t)]|^2 dt\right],
\end{aligned}
\end{equation*}
we deduce that
\begin{equation}\label{10}
\begin{aligned}
&~~~\mathbb{E}\left[\int_0^T e^{\beta t} \int_t^T  \left\|g(t,s,y_1(s),z_1(t,s),z_1(s,t),\mathbb{E}[y_1(s)],\mathbb{E}[z_1(t,s)],\mathbb{E}[z_1(s,t)])\right.\right.\\
&~~~\left.\left.-g(t,s,y_2(s),z_2(t,s),z_2(s,t),\mathbb{E}[y_2(s)],\mathbb{E}[z_2(t,s)],\mathbb{E}[z_2(s,t)])\right\|^{2} dsdt\right]\\
&\leq\alpha\mathbb{E}\left[\int_0^T e^{\beta t} \int_t^T \left(|\widehat{y}(s)|^2+ \|\widehat{z}(t,s)\|^2+\|\widehat{z}(s,t)\|^2
+\|\mathbb{E}[\widehat{y}(s)]\|^2+ \|\mathbb{E}[\widehat{z}(t,s)]\|^2\right.\right.\\
&~~~\left.\left.+\|\mathbb{E}[\widehat{z}(s,t)]\|^2 \right)    dsdt\right]\\
&\leq 2\left(\frac{\alpha}{\beta}+\alpha\right)\mathbb{E}\left[\int_0^T e^{\beta t} |\widehat{y}(t)|^2 dt\right]
+2\alpha\mathbb{E}\left[\int_0^T \int_t^T e^{\beta s}\|\widehat{z}(t,s)\|^2 dsdt\right].
\end{aligned}
\end{equation}
Substituting (\ref{8})-(\ref{10}) into (\ref{6}) implies that
\begin{equation}\label{16}
\begin{aligned}
&~~~~\mathbb{E}\left[\int_0^T e^{\beta t} |\widehat{Y}(t)|^2 dt+\int_0^T \int_t^T  e^{\beta s}\|\widehat{Z}(t,s)\|^2 dsdt\right]\\
&\leq\left[\frac{20c(T+1)+2\alpha}{\beta}+2\alpha(T+2) \right]\mathbb{E}\left[\int_0^T e^{\beta t} |\widehat{y}(t)|^2 dt\right]\\
&~~~+\left(\frac{20c}{\beta}+4\alpha\right)\mathbb{E}\left[\int_0^T \int_t^T e^{\beta s}\|\widehat{z}(t,s)\|^2 dsdt\right]\\
&\leq\gamma\mathbb{E}\left[\int_0^T e^{\beta t} |\widehat{y}(t)|^2 dt+\int_0^T \int_t^T e^{\beta s}\|\widehat{z}(t,s)\|^2 dsdt\right],
\end{aligned}
\end{equation}
where $\gamma=\frac{20c(T+1)+2\alpha}{\beta}+2\alpha(T+2)$. By $0<\alpha<\frac{1}{2(T+2)}$,
if $\beta>\frac{20c(T+1)+2\alpha}{1-2\alpha(T+2)}$, then it is clear that $\Gamma$ is a contractive map on $\mathcal{M}_\beta^2 [0,T]$.
From the fixed point theorem, one can see that there exists a unique fixed point $(Y(\cdot), Z(\cdot,\cdot))$
in $\Gamma$ such that
$\Gamma(Y(\cdot), Z(\cdot,\cdot))=(Y(\cdot), Z(\cdot,\cdot))$.
Thus MF-BDSVIE (\ref{3}) admits a unique M-solution $(Y(\cdot), Z(\cdot,\cdot))\in \mathcal{M}_\beta^2 [0,T]$.
Similarly, the estimates (\ref{30})-(\ref{32}) can also be inferred from (2.2) in \cite{Shi2020}.
\end{proof}

Next, let us research the special case of Theorem \ref{th1}.
\begin{corollary}\label{cor1}
Let $f=f(t,s,y,z,\overline{y},\overline{z})$ and $g=g(t,s,y,z,\overline{y},\overline{z})$. Under $(A1)$-$(A3)$,
for each $\zeta(\cdot)\in L_{\mathcal{F}_T}^{2} (0,T; \mathbb{R}^n)$, the following MF-BDSVIE:
\begin{equation*}
\begin{aligned}
Y(t)&=\zeta(t)+\int_{t}^{T} f(t, s, Y(s), Z(t, s), \mathbb{E}[Y(s)], \mathbb{E}[Z(t, s)])ds\\
&~~~+\int_{t}^{T} g(t, s, Y(s), Z(t, s), \mathbb{E}[Y(s)], \mathbb{E}[Z(t, s)]) d\overleftarrow{B}(s)
-\int_{t}^{T} Z(t, s) d\overrightarrow{W}(s), ~~t\in[0, T],
\end{aligned}
\end{equation*}
has a unique solution $(Y(\cdot), Z(\cdot,\cdot))\in L_{\mathbb{F}}^{2} (0,T; \mathbb{R}^n)\times L_{\mathbb{F}}^{2} (\Delta; \mathbb{R}^{n\times m})$.
\end{corollary}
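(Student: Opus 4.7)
The plan is to deduce this corollary from Theorem \ref{th1} by a trivial embedding of the coefficients into the framework already treated. First I would define augmented eight-variable coefficients
\[
\tilde f(t,s,y,z,\varsigma,\bar y,\bar z,\bar\varsigma):=f(t,s,y,z,\bar y,\bar z),\qquad
\tilde g(t,s,y,z,\varsigma,\bar y,\bar z,\bar\varsigma):=g(t,s,y,z,\bar y,\bar z),
\]
which by construction are constant in the two ``off-diagonal'' arguments $\varsigma$ and $\bar\varsigma$ that would correspond to $Z(s,t)$ and $\mathbb{E}[Z(s,t)]$. Since they ignore these slots, $\tilde f$ and $\tilde g$ inherit (A1)--(A3) with exactly the same constants $c$ and $\alpha$ as $f$ and $g$.

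Theorem \ref{th1} then yields a unique M-solution $(Y(\cdot),Z(\cdot,\cdot))\in\mathcal{M}_\beta^2[0,T]$ of the full MF-BDSVIE (\ref{3}) associated with $\tilde f$, $\tilde g$, $\zeta$. Because $\tilde f$ and $\tilde g$ never consult the $\varsigma$ and $\bar\varsigma$ variables, the equation in (\ref{3}) reduces identically to the one in the corollary, so the restriction $(Y(\cdot),Z(\cdot,\cdot)|_\Delta)\in L_{\mathbb{F}}^{2} (0,T; \mathbb{R}^n)\times L_{\mathbb{F}}^{2} (\Delta; \mathbb{R}^{n\times m})$ is a solution of the corollary's equation. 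This settles existence.

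For uniqueness, suppose $(\tilde Y(\cdot),\tilde Z(\cdot,\cdot))$ is any solution to the corollary's equation in the required space. I would extend $\tilde Z$ from $\Delta$ to all of $[0,T]^2$ by prescribing, for each fixed $t\in[0,T]$, the values $\tilde Z(t,s)$ on $s\in[0,t]$ through the It\^o martingale representation of the $\mathcal{F}_t$-martingale $s\mapsto\mathbb{E}[\tilde Y(t)\,|\,\mathcal{F}_s]$, so that the identity (\ref{4}) holds with $S=0$. The extended pair then lies in $\mathcal{M}_\beta^2[0,T]$ and is an M-solution of the augmented MF-BDSVIE; the uniqueness clause of Theorem \ref{th1} forces it to coincide with $(Y,Z)$, and restricting back to $\Delta$ gives $(\tilde Y,\tilde Z)=(Y,Z|_\Delta)$.

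The only technical point is the joint measurability in $(t,s,\omega)$ and square-integrability of the extension on $\Delta^c$, which is a standard consequence of the It\^o isometry together with $\mathbb{E}\int_0^T|\tilde Y(t)|^2\,dt<\infty$. I expect no real obstacle here: the whole argument is essentially a one-line reduction, which is precisely why the statement is packaged as a corollary rather than a theorem.
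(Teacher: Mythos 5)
Your reduction is correct and is exactly the route the paper intends: the paper states Corollary \ref{cor1} with no written proof, treating it as the special case of Theorem \ref{th1} in which $f$ and $g$ do not depend on $\varsigma=Z(s,t)$ and $\overline{\varsigma}=\mathbb{E}[Z(s,t)]$, so that (as the paper itself remarks at the start of Section \ref{sec4}) the M-solution device becomes superfluous and uniqueness holds already in $L_{\mathbb{F}}^{2}(0,T;\mathbb{R}^n)\times L_{\mathbb{F}}^{2}(\Delta;\mathbb{R}^{n\times m})$. Your extension of $\tilde Z$ to $\Delta^c$ via the representation in (\ref{4}) to upgrade an arbitrary solution to an M-solution before invoking uniqueness is precisely the bookkeeping step the paper leaves implicit.
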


\section{Comparison theorem of MF-BDSVIEs}\label{sec4}

In this section, we are going to present a comparison theorem for MF-BDSVIEs allowing the
one-dimensional case.

For $i=1,2$, we consider the following MF-BDSVIE:
\begin{equation}\label{13}
\begin{aligned}
Y_i (t)&=\zeta_i (t)+\int_{t}^{T} f_i (t, s, Y_i (s), Z_i (t, s), \mathbb{E}[Y_i (s)])ds\\
&~~~+\int_{t}^{T} g(t, s, Y_i (s), Z_i (t, s), \mathbb{E}[Y_i (s)]) d\overleftarrow{B}(s)
-\int_{t}^{T} Z_i (t, s) d\overrightarrow{W}(s), ~~t\in[0, T].
\end{aligned}
\end{equation}
Since the generators $f_i$ and $g$ are independent of $Z_i (s,t)$ and $\mathbb{E}[Z_i (s,t)]$,
the definition of M-solutions is no more needed.
It is well-known that suppose $(A1)$-$(A3)$ hold, for any $\zeta_i (\cdot)$ $\in L_{\mathcal{F}_T}^{2} (0,T; \mathbb{R})$,
there is a unique solution $(Y_i (\cdot), Z_i (\cdot,\cdot))\in L_{\mathbb{F}}^{2} (0,T; \mathbb{R})\times L_{\mathbb{F}}^{2} (\Delta; \mathbb{R})$ of MF-BDSVIE (\ref{13}) according to Corollary \ref{cor1}.

\begin{theorem}\label{th3}
For $i=1,2$, let $(A1)$-$(A3)$ hold for the maps $f_i$ and $g$. Suppose that $\overline{f}=\overline{f}(t,s,y,z,\overline{y})$ satisfies $(A1)$-$(A3)$ such that $y\mapsto \overline{f}(t,s,y,z,\overline{y})$ and $\overline{y}\mapsto \overline{f}(t,s,y,z,\overline{y})$ are nondecreasing, respectively, and
$$f_1 (t,s,y,z,\overline{y})\leq \overline{f}(t,s,y,z,\overline{y})\leq f_2 (t,s,y,z,\overline{y}),$$
$$\forall (t,y,z,\overline{y})\in [0,s]\times\mathbb{R}\times\mathbb{R}\times\mathbb{R},~a.s., a.e.~s\in[0, T].$$
For each $\zeta_i (\cdot)\in L_{\mathcal{F}_T}^{2} (0,T; \mathbb{R})$, if $\zeta_1 (t)\leq\zeta_2 (t), a.s.~t\in[0, T]$, then
$$Y_1 (t)\leq Y_2 (t),~a.s., a.e.~t\in[0, T].$$
\end{theorem}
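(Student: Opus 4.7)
The plan is to reduce the desired comparison $Y_1 \leq Y_2$ to three successive comparisons linking $Y_1$, two intermediate solutions driven by the bridging generator $\overline{f}$, and $Y_2$; and then to prove each link by combining a Picard iteration (which converts each MF-BDSVIE into a limit of classical BDSVIEs) with the BDSVIE comparison theorem of \cite{Shi2020}.

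More precisely, I would first invoke Corollary \ref{cor1} to introduce the unique M-solutions $(\overline{Y}_1,\overline{Z}_1)$ and $(\overline{Y}_2,\overline{Z}_2)$ of the MF-BDSVIE with coefficients $(\overline{f},g)$ and terminal values $\zeta_1$ and $\zeta_2$, respectively. The target then splits as $Y_1(t)\leq\overline{Y}_1(t)\leq\overline{Y}_2(t)\leq Y_2(t)$ almost surely and almost everywhere in $t$. Each of the three links involves two MF-BDSVIEs sharing the same $g$ and either the same terminal with ordered generators (the outer links) or the same generator with ordered terminals (the middle link). The middle link is the easiest, since it is a direct monotonicity-in-terminal argument for the single MF-BDSVIE driven by $\overline{f}$; the two outer links are the essential ones.

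For each link I would set up a Picard iteration freezing the mean-field arguments at the expectations of the previous iterates. Starting from $Y_i^{(0)}\equiv 0$ (and similarly for the auxiliary sequences), the $(k+1)$-st iterate solves, for each fixed $t$, a classical (non mean-field) BDSVIE in $s$; existence and uniqueness of each iterate follow from Lemma 2.5 in \cite{Shi2020}, while a weighted $L^2$-estimate in the spirit of the proof of Theorem \ref{th1} guarantees convergence in $\mathcal{M}_\beta^2[0,T]$ to the true M-solutions. The induction then runs as follows: assuming the desired ordering of the iterates at step $k$, hence of their expectations, the sandwich hypothesis $f_1\leq\overline{f}\leq f_2$ combined with the monotonicity of $\overline{f}$ in $y$ and $\overline{y}$ yields a pointwise ordering of the frozen generators at step $k+1$; together with $\zeta_1\leq\zeta_2$ and a common $g$-coefficient, the BDSVIE comparison theorem of \cite{Shi2020} propagates the ordering to step $k+1$, and the $L^2$-limit delivers the conclusion.

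The main obstacle is that naively freezing the mean-field argument produces two slightly different $g$-coefficients across the paired BDSVIEs at each iteration step, since $g$ is evaluated at the distinct previous-iterate expectations, whereas the classical BDSVIE comparison theorem presupposes an identical $g$. I would overcome this either by freezing the $\overline{y}$-argument of $g$ at a single common value at each step (e.g., the expectation of the larger iterate) and verifying that this substitution does not alter the limit, or by extending the comparison theorem of \cite{Shi2020} to a Lipschitz perturbation of $g$ whose magnitude is controlled by the gap between the iterates' means and absorbed into the weighted $L^2$-estimate using the strict bound $\alpha<\frac{1}{2(T+2)}$ built into $(A3)$. The delicate bookkeeping of this mismatch, rather than the overall architecture, is where the real work lies.
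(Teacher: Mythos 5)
Your proposal follows essentially the same route as the paper's proof: both freeze only the mean-field slot of the generators at the expectation of the previous iterate, so that each step is a classical BDSVIE whose solvability comes from \cite{Shi2020}, apply the BDSVIE comparison theorem (Theorem 4.2 of \cite{Shi2020}) step by step, show the iterates are Cauchy in the weighted $L^2$ norm using $\alpha<\frac{1}{2(T+2)}$, and pass to the limit. There are two bookkeeping differences. First, the paper interposes a single intermediate solution $\overline{Y}$ attached to an intermediate terminal $\overline{\zeta}$ with $\zeta_1\leq\overline{\zeta}\leq\zeta_2$, giving the chain $Y_1\leq\overline{Y}\leq Y_2$, rather than your four-term chain with two auxiliary solutions. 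Second, and more substantively, the paper seeds the iteration at $\widetilde{Y}_0=Y_2$ rather than at $0$: this turns the scheme into a single monotone decreasing sequence $Y_2=\widetilde{Y}_0\geq\widetilde{Y}_1\geq\cdots$, and at the first step it makes the frozen $g$-coefficient in $\widetilde{Y}_1$'s equation coincide exactly with the one in $Y_2$'s own equation, so the comparison theorem applies verbatim there. The obstacle you identify is genuine: from the second step on, the two equations being compared carry $g(t,s,y,z,\mathbb{E}[\widetilde{Y}_p(s)])$ versus $g(t,s,y,z,\mathbb{E}[\widetilde{Y}_{p-1}(s)])$, while the cited comparison theorem presupposes a common $g$; the paper's proof does not address this either, dismissing it with ``following the similar approach.'' Of your two proposed remedies, only the second is viable: freezing $g$ at a single common value across the paired equations would change the limiting equation (the limit of the modified iteration would no longer solve MF-BDSVIE (\ref{13})), whereas treating the mismatch as a Lipschitz perturbation of $g$ controlled by $\|\widetilde{Y}_p-\widetilde{Y}_{p-1}\|$ and absorbed into the weighted estimate via $\alpha<\frac{1}{2(T+2)}$ is the honest way to close the argument. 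In this respect your write-up is, if anything, more candid than the published proof.
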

\begin{proof}
Take $\overline{\zeta} (\cdot)\in L_{\mathcal{F}_T}^{2} (0,T; \mathbb{R})$ such that
$$\zeta_1 (t) \leq \overline{\zeta} (t) \leq \zeta_2 (t),~a.s.~t\in [0,T].$$
Suppose that $(\overline{Y}(\cdot), \overline{Z}(\cdot, \cdot))\in L_{\mathbb{F}}^{2} (0,T; \mathbb{R})\times L_{\mathbb{F}}^{2} (\Delta; \mathbb{R})$ is the unique solution of the following equation:
\begin{align*}
\overline{Y} (t)&=\overline{\zeta} (t)+\int_{t}^{T} \overline{f} (t, s, \overline{Y} (s), \overline{Z} (t,s), \mathbb{E}[\overline{Y} (s)])ds+\int_{t}^{T} g(t, s, \overline{Y} (s), \overline{Z} (t,s), \mathbb{E}[\overline{Y} (s)]) d\overleftarrow{B}(s)\\
&~~~-\int_{t}^{T} \overline{Z}(t, s) d\overrightarrow{W}(s), ~~~~t\in[0, T].
\end{align*}
Let $\widetilde{Y}_0 (\cdot)=Y_2 (\cdot)$. From Theorem 3.3 in \cite{Shi2020}, we denote
$(\widetilde{Y}_1 (\cdot), \widetilde{Z}_1 (\cdot,\cdot))\in L_{\mathbb{F}}^{2} (0,T; \mathbb{R})\times L_{\mathbb{F}}^{2} (\Delta; \mathbb{R})$ by the unique solution of the following equation:
\begin{align*}
\widetilde{Y}_1 (t)&=\overline{\zeta} (t)+\int_{t}^{T} \overline{f} (t, s, \widetilde{Y}_1 (s), \widetilde{Z}_1 (t,s), \mathbb{E}[\widetilde{Y}_0 (s)])ds\\
&~~~+\int_{t}^{T} g(t, s, \widetilde{Y}_1 (s), \widetilde{Z}_1 (t,s), \mathbb{E}[\widetilde{Y}_0 (s)]) d\overleftarrow{B}(s)-\int_{t}^{T} \widetilde{Z}_1 (t, s) d\overrightarrow{W}(s), ~~~~t\in[0, T].
\end{align*}
Notice that
\begin{align*}
\left\{\begin{array}{lcl} \overline{f}(t,s,y,z,\mathbb{E}[\widetilde{Y}_0 (s)])\leq f_2 (t,s,y,z,\mathbb{E}[\widetilde{Y}_0 (s)]), ~(t,y,z)\in [0,s]\times\mathbb{R}\times\mathbb{R},~a.s., a.e.~s\in[0,T];\\
\overline{\zeta} (t)\leq \zeta_2 (t),~a.s., a.e.~t\in[0,T].
\end{array}
\right.
\end{align*}
Applying Theorem 4.2 in \cite{Shi2020}, we have the following:
\begin{equation}\label{14}
\begin{aligned}
Y_2 (t)=\widetilde{Y}_0 (t) \geq\widetilde{Y}_1 (t),~a.s., a.e.~t\in[0,T].
\end{aligned}
\end{equation}
Then we consider the following equation:
\begin{equation}\label{15}
\begin{aligned}
\widetilde{Y}_2 (t)&=\overline{\zeta} (t)+\int_{t}^{T} \overline{f} (t, s, \widetilde{Y}_2 (s), \widetilde{Z}_2 (t,s), \mathbb{E}[\widetilde{Y}_1 (s)])ds\\
&~~~+\int_{t}^{T} g(t, s, \widetilde{Y}_2 (s), \widetilde{Z}_2 (t,s), \mathbb{E}[\widetilde{Y}_1 (s)]) d\overleftarrow{B}(s)-\int_{t}^{T} \widetilde{Z}_2 (t, s) d\overrightarrow{W}(s), ~~~~t\in[0, T].
\end{aligned}
\end{equation}
By Theorem 3.3 in \cite{Shi2020}, we know that there exists a unique solution $(\widetilde{Y}_2 (\cdot), \widetilde{Z}_2 (\cdot,\cdot))\in L_{\mathbb{F}}^{2} (0,T; \mathbb{R})\times L_{\mathbb{F}}^{2} (\Delta; \mathbb{R})$
to BDSVIE (\ref{15}).
Since $\overline{y}\mapsto \overline{f}(t,s,y,z,\overline{y})$ is nondecreasing, and using (\ref{14}), one can have that
\begin{align*}
\overline{f}(t,s,y,z,\mathbb{E}[\widetilde{Y}_0 (s)])\geq  \overline{f}(t,s,y,z,\mathbb{E}[\widetilde{Y}_1 (s)]), ~(t,y,z)\in [0,s]\times\mathbb{R}\times\mathbb{R},~a.s., a.e.~s\in[0,T].
\end{align*}
Following the similar approach, it is easy to see that
\begin{align*}
\widetilde{Y}_1 (t)\geq\widetilde{Y}_2 (t),~a.s., a.e.~t\in[0,T].
\end{align*}
Therefore, let the sequence $\{(\widetilde{Y}_p (\cdot), \widetilde{Z}_p (\cdot,\cdot))\}_{p=1}^{\infty}\in L_{\mathbb{F}}^{2} (0,T; \mathbb{R})\times L_{\mathbb{F}}^{2} (\Delta; \mathbb{R})$ satisfy the following equation:
\begin{equation}\label{17}
\begin{aligned}
\widetilde{Y}_p (t)&=\overline{\zeta} (t)+\int_{t}^{T} \overline{f} (t, s, \widetilde{Y}_p (s), \widetilde{Z}_p (t,s), \mathbb{E}[\widetilde{Y}_{p-1} (s)])ds\\
&~~~+\int_{t}^{T} g(t, s, \widetilde{Y}_p (s), \widetilde{Z}_p (t,s), \mathbb{E}[\widetilde{Y}_{p-1} (s)]) d\overleftarrow{B}(s)-\int_{t}^{T} \widetilde{Z}_p (t, s) d\overrightarrow{W}(s),~~t\in[0, T],
\end{aligned}
\end{equation}
we can deduce that
\begin{align*}
Y_2 (t)=\widetilde{Y}_0 (t) \geq\widetilde{Y}_1 (t)\geq\widetilde{Y}_2 (t)\geq\cdots\geq\widetilde{Y}_p (t)\geq\cdots,~a.s., a.e.~t\in[0,T].
\end{align*}

Next, we shall show that $\{(\widetilde{Y}_p (\cdot), \widetilde{Z}_p (\cdot,\cdot))\}_{p=1}^{\infty}$ is a Cauchy sequence.
Note that (2.2) in \cite{Shi2020} and by (\ref{16}), the following estimate holds:
\begin{equation*}
\begin{aligned}
&~~~~\mathbb{E}\left[\int_0^T e^{\beta t} |\widetilde{Y}_p (t)-\widetilde{Y}_q (t)|^2 dt+\int_0^T \int_t^T  e^{\beta s}|\widetilde{Z}_p (t,s)-\widetilde{Z}_q (t,s)|^2 dsdt\right]\\
&\leq\frac{10}{\beta} \mathbb{E}\left[\int_0^T \int_t^T e^{\beta s}\left|\overline{f}(t,s,\widetilde{Y}_p (s),\widetilde{Z}_p (t,s),\mathbb{E}[\widetilde{Y}_{p-1} (s)])\right.\right.\\
&~~~\left.\left.-\overline{f}(t,s,\widetilde{Y}_q (s),\widetilde{Z}_q (t,s),\mathbb{E}[\widetilde{Y}_{q-1} (s)])\right|^2 dsdt\right]\\
&~~~+\mathbb{E}\left[\int_0^T  \int_t^T e^{\beta s} \left| g(t,s,\widetilde{Y}_p (s),\widetilde{Z}_p (t,s),\mathbb{E}[\widetilde{Y}_{p-1} (s)])\right.\right.\\
&~~~\left.\left.-g (t,s,\widetilde{Y}_q (s),\widetilde{Z}_q (t,s),\mathbb{E}[\widetilde{Y}_{q-1} (s)])\right|^{2} dsdt\right]\\
&~~~+\mathbb{E}\left[\int_0^T e^{\beta t} \int_t^T  \left|g(t,s,\widetilde{Y}_p (s),\widetilde{Z}_p (t,s),\mathbb{E}[\widetilde{Y}_{p-1} (s)])\right.\right.\\
&~~~\left.\left.-g(t,s,\widetilde{Y}_q (s),\widetilde{Z}_q (t,s),\mathbb{E}[\widetilde{Y}_{q-1} (s)])\right|^{2} dsdt\right]\\
&\leq\gamma\mathbb{E}\left[\int_0^T e^{\beta t} |\widetilde{Y}_p (t)-\widetilde{Y}_q (t)|^2 dt
+\int_0^T e^{\beta t} |\widetilde{Y}_{p-1} (t)-\widetilde{Y}_{q-1} (t)|^2 dt\right.\\
&~~~\left.+\int_0^T \int_t^T e^{\beta s}|\widetilde{Z}_p (t,s)-\widetilde{Z}_q (t,s)|^2 dsdt\right],
\end{aligned}
\end{equation*}
where $\gamma=\frac{20c(T+1)+2\alpha}{\beta}+2\alpha(T+2)$. From $0<\alpha<\frac{1}{2(T+2)}$, and by choosing
$\beta=\frac{80c(T+1)+8\alpha}{1-6\alpha(T+2)}$, one can have the following:
\begin{equation*}
\begin{aligned}
&~~~~\mathbb{E}\left[\int_0^T e^{\beta t} |\widetilde{Y}_p (t)-\widetilde{Y}_q (t)|^2 dt+\int_0^T \int_t^T  e^{\beta s}|\widetilde{Z}_p (t,s)-\widetilde{Z}_q (t,s)|^2 dsdt\right]\\
&\leq\frac{\gamma}{1-\gamma}\mathbb{E}\left[\int_0^T e^{\beta t} |\widetilde{Y}_{p-1} (t)-\widetilde{Y}_{q-1} (t)|^2 dt\right],
\end{aligned}
\end{equation*}
where $\gamma=\frac{1+2\alpha(T+2)}{4}<\frac{1}{2}$. In other words, $\{(\widetilde{Y}_p (\cdot), \widetilde{Z}_p (\cdot,\cdot))\}_{p=1}^{\infty}$ is a Cauchy sequence in $L_{\mathbb{F}}^{2} (0$, $T;\mathbb{R})\times L_{\mathbb{F}}^{2} (\Delta; \mathbb{R})$ and converges to some $(\widetilde{Y} (\cdot), \widetilde{Z} (\cdot,\cdot))\in L_{\mathbb{F}}^{2} (0,T;\mathbb{R})\times L_{\mathbb{F}}^{2} (\Delta; \mathbb{R})$ such that
\begin{align*}
\lim_{p \rightarrow\infty} \mathbb{E}\left[\int_0^T e^{\beta t} |\widetilde{Y}_p (t)-\widetilde{Y} (t)|^2 dt+\int_0^T \int_t^T  e^{\beta s}|\widetilde{Z}_p (t,s)-\widetilde{Z} (t,s)|^2 dsdt\right]=0.
\end{align*}
Finally, we can let $p\rightarrow\infty$ in Eq. (\ref{17}) to obtain that
\begin{equation*}
\begin{aligned}
\widetilde{Y} (t)&=\overline{\zeta} (t)+\int_{t}^{T} \overline{f} (t, s, \widetilde{Y} (s), \widetilde{Z} (t,s), \mathbb{E}[\widetilde{Y} (s)])ds\\
&~~~+\int_{t}^{T} g(t, s, \widetilde{Y} (s), \widetilde{Z} (t,s), \mathbb{E}[\widetilde{Y} (s)]) d\overleftarrow{B}(s)-\int_{t}^{T} \widetilde{Z} (t, s) d\overrightarrow{W}(s), ~~~~t\in[0, T].
\end{aligned}
\end{equation*}
According to Theorem \ref{th1}, we derive that
\begin{align*}
Y_2 (t)=\widetilde{Y}_0 (t)\geq\widetilde{Y}(t)= \overline{Y}(t),~a.s., a.e.~t\in[0,T].
\end{align*}
Similarly, one can show that
\begin{align*}
\overline{Y} (t)\geq Y_1 (t),~a.s., a.e.~t\in[0,T].
\end{align*}
The proof is therefore complete.
\end{proof}

\section{Regularity result for the M-solution of MF-BDSVIEs}\label{sec5}

The discussion in this section is aimed to prove the differentiability
of the M-solution to MF-BDSVIE (\ref{3}) through Malliavin calculus.

Denote by $\Phi^2 [0, T]$ the space of all processes $\phi(\cdot)\in L^{\infty} ([0,T]; L_{\mathcal{F}_T}^{2} (\Omega; \mathbb{R}^n))$ such that
\begin{align*}
\|\phi(\cdot)\|_{\Phi^2 [0, T]}^2
:=\sup\limits_{t\in [0, T]} \mathbb{E}\left[ |\phi(t)|^2 +\int_0^T \sum\limits_{i=1}^{m} |D_t^i \phi(s)|^2 ds\right].
\end{align*}

For convenience, MF-BDSVIE (\ref{3}) can be rewritten as
\begin{equation}\label{45}
\begin{aligned}
Y(t)&=\zeta(t)+\int_{t}^{T} f(t, s, Y(s), Z(t, s), Z(s, t), \mathbb{E}[Y(s)], \mathbb{E}[Z(t, s)], \mathbb{E}[Z(s, t)])ds\\
&~~~+\int_{t}^{T} g(t, s, Y(s), Z(t, s), Z(s, t), \mathbb{E}[Y(s)], \mathbb{E}[Z(t, s)], \mathbb{E}[Z(s, t)]) d\overleftarrow{B}(s)\\
&~~~-\int_{t}^{T} Z(t, s) d\overrightarrow{W}(s), ~~~~t\in[0, T],
\end{aligned}
\end{equation}
where $f: \Omega\times \Delta \times (\mathbb{R}^n \times\mathbb{R}^{n\times m}\times\mathbb{R}^{n\times m})^2
\mapsto\mathbb{R}^n$ and
$g: \Omega\times \Delta \times (\mathbb{R}^n \times\mathbb{R}^{n\times m}\times\mathbb{R}^{n\times m})^2
\mapsto \mathbb{R}^{n\times k}$ are jointly measurable maps, and satisfy the assumptions:
\begin{itemize}
\item[(A4)]
Let $(y,z,\varsigma,\overline{y},\overline{z},\overline{\varsigma})\mapsto
f(t,s,y,z,\varsigma,\overline{y},\overline{z},\overline{\varsigma})$ and $(y,z,\varsigma,\overline{y},\overline{z},\overline{\varsigma})\mapsto
g(t,s,y,z,\varsigma,\overline{y},\overline{z},\overline{\varsigma})$ be continuously differentiable.
There exist constants $c>0$ and $0<\alpha<\frac{1}{2(T+2)}$
such that the norm of partial derivatives of $f$
is bounded by $c$,
and the norm of partial derivatives of $g$ is bounded by $\alpha$.
\item[(A5)]
For each $1\leq i \leq m$, let $(y,z,\varsigma,\overline{y},\overline{z},\overline{\varsigma})\mapsto
\left[D_r^i f\right](t,s,y,z,\varsigma,\overline{y},\overline{z},\overline{\varsigma})$ and $(y,z,\varsigma,\overline{y},\overline{z},\overline{\varsigma})\mapsto
\left[D_r^i g\right](t,s,y,z,\varsigma,\overline{y}$, $\overline{z},\overline{\varsigma})$ be continuous for all $r\in [0,T]$.
There exist stochastic processes $L_1 (t,s), L_2 (t,s): \Omega\times\Delta\mapsto [0,+\infty)$ satisfying
\begin{align*}
\mathbb{E}\left[\int_0^T\int_t^T |L_1 (t,s)|^2 dsdt \right]<\infty,~~~\mathbb{E}\left[\int_0^T\int_t^T |L_2 (t,s)|^2 dsdt \right]<\frac{1}{2(T+2)}<\infty,
\end{align*}
such that for any $(r,t,s,y,z,\varsigma,\overline{y},\overline{z},\overline{\varsigma})
\in [0,T]\times\Delta\times(\mathbb{R}^n \times\mathbb{R}^{n\times m}\times\mathbb{R}^{n\times m})^2$, we have
\begin{align*}
\sum\limits_{i=1}^{m} \left|\left[D_r^i f\right](t,s,y,z,\varsigma,\overline{y},\overline{z},\overline{\varsigma})\right|\leq L_1 (t,s),\\
\sum\limits_{i=1}^{m} \left\|\left[D_r^i g\right](t,s,y,z,\varsigma,\overline{y},\overline{z},\overline{\varsigma})\right\|\leq L_2 (t,s).
\end{align*}
\end{itemize}

We are now in a position to prove the regularity of the M-solution to MF-BDSVIE (\ref{45}) as the main theorem of this section.
\begin{theorem}\label{th6}
Under $(A4)$-$(A5)$, for any $\zeta (\cdot)\in \Phi^2 [0, T]$, let $(Y(\cdot), Z(\cdot,\cdot))\in \mathcal{M}_\beta^2 [0,T]~(\beta>0)$ be the unique M-solution of MF-BDSVIE (\ref{45}).
Then for any $0\leq r \leq T$ and $1\leq i\leq m$, $(D_r^i Y(\cdot), D_r^i Z(\cdot,\cdot))$ is the unique M-solution of the following MF-BDSVIE:
\begin{equation}\label{33}
\begin{aligned}
D_r^i Y(t)&=D_r^i \zeta (t)+\int_t^T \left\{\left[D_r^i f\right]\left(\Gamma (t,s)\right)+f_y \left(\Gamma (t,s)\right)D_r^i Y(s)
+f_{\bar{y}} \left(\Gamma (t,s)\right)\mathbb{E}[D_r^i Y(s)]\right.\\
&~~+\sum\limits_{j=1}^{m} \left( f_{z_j}\left(\Gamma (t,s)\right)D_r^i Z_j (t,s)+f_{\varsigma_j}\left(\Gamma (t,s)\right)D_r^i Z_j (s,t)
+f_{\bar{z}_j}\left(\Gamma (t,s)\right)\mathbb{E}[D_r^i Z_j (t,s)]\right.\\
&~~+\left.\left.f_{\bar{\varsigma}_j}\left(\Gamma (t,s)\right)\mathbb{E}[D_r^i Z_j (s,t)]\right)\right\} ds\\
&~~+\sum\limits_{l=1}^{k}\int_t^T\left\{ \left[D_r^i g^l \right]\left(\Gamma (t,s)\right)
+g_y^l \left(\Gamma (t,s)\right)D_r^i Y(s)
+g_{\bar{y}}^l \left(\Gamma (t,s)\right)\mathbb{E}[D_r^i Y(s)]\right.\\
&~~+\sum\limits_{j=1}^{m} \left( g_{z_j}^l \left(\Gamma (t,s)\right)D_r^i Z_j (t,s)+g_{\varsigma_j}^l \left(\Gamma (t,s)\right)D_r^i Z_j (s,t)
+g_{\bar{z}_j}^l \left(\Gamma (t,s)\right)\mathbb{E}[D_r^i Z_j (t,s)]\right.\\
&~~+\left.\left.g_{\bar{\varsigma}_j}^l \left(\Gamma (t,s)\right)\mathbb{E}[D_r^i Z_j (s,t)]\right)\right\} d\overleftarrow{B}_l (s) -\int_t^T D_r^i Z (t,s) d\overrightarrow{W}(s),~~~(t,r)\in \Delta^c,
\end{aligned}
\end{equation}
where
\begin{align*}
\Gamma (t,s):= (t,s,Y(s),Z(t,s),Z(s,t),\mathbb{E}[Y(s)],\mathbb{E}[Z(t,s)],\mathbb{E}[Z(s,t)]),
\end{align*}
$\{Z_j (\cdot,\cdot)\}_{1\leq j \leq m}$ is the $j$-th column of the matrix $Z(\cdot,\cdot)$,
$\{g^l (\cdot)\}_{1\leq l \leq k}$ is the $l$-th column of the matrix $g(\cdot)$, and
$\overleftarrow{B}(\cdot)=(\overleftarrow{B}_1(\cdot),\ldots,\overleftarrow{B}_k(\cdot))$.\\
Furthermore,
\begin{equation}\label{41}
\begin{aligned}
D_r^i Y(t)=Z_i (t,r)+\int_r^t D_r^i Z (t,s) d\overrightarrow{W}(s),~~~(t,r)\in \Delta^c,
\end{aligned}
\end{equation}
which means that
\begin{equation}\label{42}
\begin{aligned}
Z_i (t,r)=\mathbb{E}\left[D_r^i Y(t)~|~\mathcal{F}_r \right],~~~(t,r)\in \Delta^c,~a.s.
\end{aligned}
\end{equation}
Otherwise,
\begin{equation}\label{34}
\begin{aligned}
Z_i (t,r)&=D_r^i \zeta (t)+\int_r^T \left\{\left[D_r^i f\right]\left(\Gamma (t,s)\right)+f_y \left(\Gamma (t,s)\right)D_r^i Y(s)
+f_{\bar{y}} \left(\Gamma (t,s)\right)\mathbb{E}[D_r^i Y(s)]\right.\\
&~~+\left.\sum\limits_{j=1}^{m} \left( f_{z_j}\left(\Gamma (t,s)\right)D_r^i Z_j (t,s)
+f_{\bar{z}_j}\left(\Gamma (t,s)\right)\mathbb{E}[D_r^i Z_j (t,s)]\right)\right\} ds\\
&~~+\sum\limits_{l=1}^{k}\int_r^T\left\{ \left[D_r^i g^l \right]\left(\Gamma (t,s)\right)
+g_y^l \left(\Gamma (t,s)\right)D_r^i Y(s)
+g_{\bar{y}}^l \left(\Gamma (t,s)\right)\mathbb{E}[D_r^i Y(s)]\right.\\
&~~+\left.\sum\limits_{j=1}^{m} \left( g_{z_j}^l \left(\Gamma (t,s)\right)D_r^i Z_j (t,s)
+g_{\bar{z}_j}^l \left(\Gamma (t,s)\right)\mathbb{E}[D_r^i Z_j (t,s)]
\right)\right\} d\overleftarrow{B}_l (s) \\
&~~-\int_r^T D_r^i Z (t,s) d\overrightarrow{W}(s),~~~(t,r)\in \Delta.
\end{aligned}
\end{equation}
\end{theorem}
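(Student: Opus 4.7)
The plan is a Picard iteration combined with the chain rule for Malliavin calculus. I will first show the candidate linearized MF-BDSVIE (\ref{33}) fits the framework of Theorem \ref{th1}; then construct the M-solution of (\ref{45}) iteratively and propagate Malliavin differentiability along the iterates; finally pass to the limit using the closability of $D$ to identify the limit with $(D_r^i Y, D_r^i Z)$. The identities (\ref{41})--(\ref{34}) are then read off from the boundary rules for $D_r^i$ applied to forward It\^o integrals.

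For the linearized equation, viewed as a linear MF-BDSVIE in the unknowns $(D_r^i Y, D_r^i Z)$, Assumption (A4) bounds the $f$-partials $f_y,f_{z_j},f_{\varsigma_j},f_{\bar y},f_{\bar z_j},f_{\bar\varsigma_j}$ at $\Gamma(t,s)$ by $c$ and the analogous $g$-partials by $\alpha$, while Assumption (A5) guarantees the required square-integrability and smallness of the inhomogeneities $[D_r^i f](\Gamma(t,s))$ and $[D_r^i g](\Gamma(t,s))$. Hence Theorem~\ref{th1} yields a unique M-solution of (\ref{33}) in $\mathcal{M}_\beta^2[0,T]$ together with a bound of the type (\ref{30}). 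For the iteration, set $(Y^0,Z^0)\equiv 0$ and let $(Y^{n+1},Z^{n+1})$ be the unique M-solution of (\ref{45}) with nonlinearities frozen at $(Y^n,Z^n)$; the contraction estimate (\ref{16}) gives $(Y^n,Z^n)\to(Y,Z)$ in $\mathcal{M}_\beta^2[0,T]$. By induction on $n$ each iterate lies in $\Phi^2[0,T]$: apply $D_r^i$ to the defining equation of $(Y^{n+1},Z^{n+1})$ using (a) the chain rule under (A4)--(A5), (b) the commutation of the $W$-Malliavin derivative with the backward It\^o integral $d\overleftarrow{B}$, valid because $W$ and $B$ are independent, and (c) the boundary rule
\[
D_r^i\!\!\int_t^T Z(t,s)\,d\overrightarrow{W}(s) = Z_i(t,r)\,\mathbf{1}_{[t,T]}(r) + \int_{r\vee t}^T D_r^i Z(t,s)\,d\overrightarrow{W}(s).
\]
This produces a linear MF-BDSVIE of the same form as (\ref{33}) relating $(D_r^i Y^{n+1}, D_r^i Z^{n+1})$ to $(D_r^i Y^n, D_r^i Z^n)$, and applying Theorem~\ref{th1} once more gives a contraction in $\mathcal{M}_\beta^2[0,T]$ uniformly in $(r,i)$. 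Closability of $D$ then delivers $(Y,Z)\in\Phi^2[0,T]$ with Malliavin derivative equal to the limit, which by passage to the limit in the linearization solves (\ref{33}).

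Relation (\ref{41}) follows by applying $D_r^i$ with $r<t$ to the M-solution identity $Y(t)=\mathbb{E}[Y(t)\mid\mathcal{F}_0]+\int_0^t Z(t,s)\,d\overrightarrow{W}(s)$: the conditional expectation is $\mathcal{F}_0$-measurable, hence $W$-independent and killed by $D_r^i$, while the It\^o integral produces the boundary $Z_i(t,r)$ via the rule above. Conditioning (\ref{41}) on $\mathcal{F}_r$ then yields (\ref{42}). Equation (\ref{34}) is the rewriting of the Malliavin-differentiated (\ref{45}) for $r\in[t,T]$: the boundary term $-Z_i(t,r)$ is moved to the left-hand side and the $s$-integrals truncated to $[r,T]$, using that the $\varsigma$-type contributions (which would involve $D_r^i Z(s,t)$ with $s\leq r$) are absent in this regime. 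The main technical obstacle is the uniform-in-$n$ Malliavin estimate in the Picard scheme: the contraction constant $\gamma=\frac{20c(T+1)+2\alpha}{\beta}+2\alpha(T+2)$ from (\ref{16}) must be pushed through the linearized equation while absorbing $[D_r^i f],[D_r^i g]$ via (A5) and the mean-field terms via the Jensen-type bounds of (\ref{53}); the quantitative hypotheses $\alpha<1/[2(T+2)]$ and the $L^2$-smallness of $L_2$ are precisely what keep the linearized contraction constant below~$1$ and close the induction.
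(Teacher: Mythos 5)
Your proposal follows essentially the same route as the paper: Picard iteration for (\ref{45}), Malliavin differentiation of each iterate via the chain rule and the boundary rule for the forward It\^{o} integral, identification of the limit with the solution of the linearized MF-BDSVIE using the closability of $D$, and the same reading-off of (\ref{41})--(\ref{34}) from the M-solution identity. The only mechanical difference is that the paper compares each differentiated iterate directly to the solution $(\tilde{Y}^r,\tilde{Z}^r)$ of the limiting linearized equation via the stability estimate (\ref{32}) together with dominated convergence to handle the $d$-dependent coefficients, whereas you phrase the convergence as a contraction between successive differentiated iterates; both versions close the argument identically.
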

\begin{proof}
For convenience, this proof only considers the one-dimensional case.
According to Theorem \ref{th1}, if $c>0$, $0<\alpha<\frac{1}{2(T+2)}$
and $\beta>\frac{20c(T+1)+2\alpha}{1-2\alpha(T+2)}$,
then $\Gamma$ defined by (\ref{31}) is a contractive map on the space $\mathcal{M}_\beta^2 [0,T]$.
Set $(Y_0 (\cdot), Z_0 (\cdot,\cdot))=0$, and for $d=0,1,2,\cdots$, we define the Picard iterations
\begin{align*}
(Y_{d+1} (\cdot), Z_{d+1} (\cdot,\cdot))=\Gamma (Y_{d} (\cdot), Z_{d} (\cdot,\cdot)).
\end{align*}
Obviously, $\{(Y_{d} (\cdot), Z_{d} (\cdot,\cdot))\}_{d=0}^{\infty}$ converges to the unique M-solution
$(Y (\cdot), Z (\cdot,\cdot))$ of MF-BDSVIE (\ref{45}), which implies that
\begin{equation*} \begin{aligned}
\lim\limits_{d\rightarrow \infty}\|(Y_d (\cdot),Z_d (\cdot,\cdot))-(Y(\cdot),Z(\cdot,\cdot))\|_{\mathcal{M}_\beta^2 [0,T]}=0.
\end{aligned}
\end{equation*}
Hereafter, we consider the following MF-BDSVIE:
\begin{equation*}
\begin{aligned}
Y_{d+1}(t)&=\zeta(t)+\int_{t}^{T} f(t, s, Y_d (s), Z_d (t, s), Z_d (s, t), \mathbb{E}[Y_d (s)], \mathbb{E}[Z_d (t, s)], \mathbb{E}[Z_d (s, t)])ds
\end{aligned}
\end{equation*}
\begin{equation*}
\begin{aligned}
&~~~+\int_{t}^{T} g(t, s, Y_d (s), Z_d (t, s), Z_d (s, t), \mathbb{E}[Y_d (s)], \mathbb{E}[Z_d (t, s)], \mathbb{E}[Z_d (s, t)]) d\overleftarrow{B}(s)\\
&~~~-\int_{t}^{T} Z_{d+1}(t, s) d\overrightarrow{W}(s), ~~~~t\in[0, T].
\end{aligned}
\end{equation*}
It then follows from Proposition 5.3 in \cite{Karoui1997} that
\begin{align*}
(D_r Y_d (\cdot),D_r Z_d (\cdot,\cdot))\in\mathcal{M}_\beta^2 [0,T],~~d=0,1,2,\cdots,
\end{align*}
and owing to Lemma 5.1 in \cite{Karoui1997} and Lemma 4.2 in \cite{Bachouch2016}, we can have
\begin{equation}\label{35}
\begin{aligned}
D_r Y_{d+1} (t)&=D_r \zeta (t)+\int_t^T \left\{\left[D_r f\right]\left(\Gamma_d (t,s)\right)+f_y \left(\Gamma_d (t,s)\right)D_r Y_d (s)+ f_{z}\left(\Gamma_d (t,s)\right)D_r Z_d (t,s)
\right.\\
&~~+f_{\varsigma}\left(\Gamma_d (t,s)\right)D_r Z_d (s,t)
+f_{\bar{y}} \left(\Gamma_d (t,s)\right)\mathbb{E}[D_r Y_d (s)]
+f_{\bar{z}}\left(\Gamma_d (t,s)\right)\mathbb{E}[D_r Z_d (t,s)]\\
&~~\left.+f_{\bar{\varsigma}}\left(\Gamma_d (t,s)\right)\mathbb{E}[D_r Z_d (s,t)]\right\} ds\\
&~~+\int_t^T\left\{ \left[D_r g \right]\left(\Gamma_d (t,s)\right)
+g_y \left(\Gamma_d (t,s)\right)D_r Y_d (s)+g_{z} \left(\Gamma_d (t,s)\right)D_r Z_d (t,s)
\right.\\
&~~+g_{\varsigma} \left(\Gamma_d (t,s)\right)D_r Z_d (s,t)+g_{\bar{y}} \left(\Gamma_d (t,s)\right)\mathbb{E}[D_r Y_d (s)]
+g_{\bar{z}} \left(\Gamma_d (t,s)\right)\mathbb{E}[D_r Z_d (t,s)]\\
&~~\left.+g_{\bar{\varsigma}} \left(\Gamma_d (t,s)\right)\mathbb{E}[D_r Z_d (s,t)]\right\} d\overleftarrow{B}_l (s) -\int_t^T D_r Z_{d+1} (t,s) d\overrightarrow{W}(s),\\
&~~(t,r)\in \Delta^c,~d=0,1,2,\cdots,
\end{aligned}
\end{equation}
where
\begin{align*}
\Gamma_d (t,s):= (t,s,Y_d (s),Z_d (t,s),Z_d (s,t),\mathbb{E}[Y_d (s)],\mathbb{E}[Z_d (t,s)],\mathbb{E}[Z_d (s,t)]).
\end{align*}
By Theorem \ref{th1}, let $(\tilde{Y}^{r} (\cdot), \tilde{Z}^{r} (\cdot,\cdot))\in\mathcal{M}_\beta^2 [0,T]$ be the unique M-solution of the linear MF-BDSVIE:
\begin{equation}\label{36}
\begin{aligned}
\tilde{Y}^r (t) &=D_r \zeta (t)+\int_t^T \left\{\left[D_r f\right]\left(\Gamma (t,s)\right)+f_y \left(\Gamma (t,s)\right) \tilde{Y}^r (s)+ f_{z}\left(\Gamma (t,s)\right) \tilde{Z}^r (t,s)+f_{\varsigma}\left(\Gamma (t,s)\right)
\right.\\
&~~\left.\cdot\tilde{Z}^r (s,t)
+f_{\bar{y}} \left(\Gamma (t,s)\right)\mathbb{E}[\tilde{Y}^r (s)]+f_{\bar{z}}\left(\Gamma (t,s)\right)\mathbb{E}[\tilde{Z}^r (t,s)]
+f_{\bar{\varsigma}}\left(\Gamma (t,s)\right)\mathbb{E}[\tilde{Z}^r (s,t)]\right\} ds\\
&~~+\int_t^T\left\{ \left[D_r g \right]\left(\Gamma (t,s)\right)
+g_y \left(\Gamma (t,s)\right)\tilde{Y}^r (s)+ g_{z} \left(\Gamma (t,s)\right)\tilde{Z}^r (t,s)+g_{\varsigma} \left(\Gamma (t,s)\right)\tilde{Z}^r (s,t)
\right.\\
&~~\left.
+g_{\bar{y}} \left(\Gamma (t,s)\right)\mathbb{E}[\tilde{Y}^r (s)]+g_{\bar{z}} \left(\Gamma (t,s)\right)\mathbb{E}[\tilde{Z}^r (t,s)]+g_{\bar{\varsigma}} \left(\Gamma (t,s)\right)\mathbb{E}[\tilde{Z}^r (s,t)]\right\} d\overleftarrow{B} (s)\\
&~~-\int_t^T \tilde{Z}^r (t,s) d\overrightarrow{W}(s),
~~(t,r)\in \Delta^c.
\end{aligned}
\end{equation}
Using Eqs. (\ref{35})-(\ref{36}), and by the stability estimate (\ref{32}) in Theorem \ref{th1}, one can deduces that
\begin{equation}\label{39}
\begin{aligned}
&~~~\mathbb{E}\left[\int_0^T e^{\beta t} |D_r Y_{d+1} (t)-\tilde{Y}^r (t)|^2 dt+\int_0^T \int_t^T  e^{\beta s} |D_r Z_{d+1} (t,s)-\tilde{Z}^r (t,s)|^2 dsdt\right]
\\
&\leq L \mathbb{E}\left[
\int_0^T \int_t^T  e^{\beta s} |\delta f^d_r (t,s)|^2 dsdt
+\int_0^T \int_t^T  e^{\beta s} |\delta g^d_r (t,s)|^2 dsdt \right],
\end{aligned}
\end{equation}
where
\begin{equation*}
\begin{aligned}
\delta f^d_r (t,s)=&\left[D_r f\right]\left(\Gamma_d (t,s)\right)-\left[D_r f\right]\left(\Gamma (t,s)\right)
+\left(f_y \left(\Gamma_d (t,s)\right)-f_y \left(\Gamma (t,s)\right)\right) \tilde{Y}^r (s)\\
&+\left(f_{z}\left(\Gamma_d (t,s)\right)-f_{z}\left(\Gamma (t,s)\right)\right) \tilde{Z}^r (t,s)
+\left(f_{\varsigma}\left(\Gamma_d (t,s)\right)-f_{\varsigma}\left(\Gamma (t,s)\right)\right) \tilde{Z}^r (s,t)\\
&+\left(f_{\bar{y}} \left(\Gamma_d (t,s)\right)-f_{\bar{y}} \left(\Gamma (t,s)\right)\right)\mathbb{E}[\tilde{Y}^r (s)]
+\left(f_{\bar{z}} \left(\Gamma_d (t,s)\right)-f_{\bar{z}}\left(\Gamma (t,s)\right)\right)\mathbb{E}[\tilde{Z}^r (t,s)] \\ &+\left(f_{\bar{\varsigma}}\left(\Gamma_d (t,s)\right)-f_{\bar{\varsigma}}\left(\Gamma (t,s)\right)\right)\mathbb{E}[\tilde{Z}^r (s,t)],\\
\delta g^d_r (t,s)=&\left[D_r g\right]\left(\Gamma_d (t,s)\right)-\left[D_r g\right]\left(\Gamma (t,s)\right)
+\left(g_y \left(\Gamma_d (t,s)\right)-g_y \left(\Gamma (t,s)\right)\right) \tilde{Y}^r (s)\\
&+\left(g_{z}\left(\Gamma_d (t,s)\right)-g_{z}\left(\Gamma (t,s)\right)\right) \tilde{Z}^r (t,s)
+\left(g_{\varsigma}\left(\Gamma_d (t,s)\right)-g_{\varsigma}\left(\Gamma (t,s)\right)\right) \tilde{Z}^r (s,t)\\
&+\left(g_{\bar{y}} \left(\Gamma_d (t,s)\right)-g_{\bar{y}} \left(\Gamma (t,s)\right)\right)\mathbb{E}[\tilde{Y}^r (s)]
+\left(g_{\bar{z}} \left(\Gamma_d (t,s)\right)-g_{\bar{z}}\left(\Gamma (t,s)\right)\right)\mathbb{E}[\tilde{Z}^r (t,s)] \\ &+\left(g_{\bar{\varsigma}}\left(\Gamma_d (t,s)\right)-g_{\bar{\varsigma}}\left(\Gamma (t,s)\right)\right)\mathbb{E}[\tilde{Z}^r (s,t)].
\end{aligned}
\end{equation*}
From the Jensen inequality one has
\begin{equation}\label{43}
\begin{aligned}
&~~~\mathbb{E}\left[
\int_0^T \int_t^T  e^{\beta s} |\delta f^d_r (t,s)|^2 dsdt \right]\\
&\leq L \mathbb{E}\left[\int_0^T \int_t^T  e^{\beta s} \left|\left[D_r f\right]\left(\Gamma_d (t,s)\right)-\left[D_r f\right]\left(\Gamma (t,s)\right) \right|^2 dsdt \right.\\
&+\int_0^T \int_t^T  e^{\beta s} \left(\left| f_y \left(\Gamma_d (t,s)\right)-f_y \left(\Gamma (t,s)\right)\right|^2
+ \mathbb{E}\left[\left| f_{\bar{y}} \left(\Gamma_d (t,s)\right)-f_{\bar{y}} \left(\Gamma (t,s)\right)\right|^2\right]  \right) \left|\tilde{Y}^r (s) \right|^2 dsdt \\
&+\int_0^T \int_t^T  e^{\beta s} \left(\left| f_{z}\left(\Gamma_d (t,s)\right)-f_{z}\left(\Gamma (t,s)\right)\right|^2
+\mathbb{E}\left[\left| f_{\bar{z}}\left(\Gamma_d (t,s)\right)-f_{\bar{z}}\left(\Gamma (t,s)\right)\right|^2\right]\right) \left|\tilde{Z}^r (t,s) \right|^2 dsdt \\
&+\left.\int_0^T \int_t^T  e^{\beta s} \left(\left|f_{\varsigma}\left(\Gamma_d (t,s)\right)-f_{\varsigma}\left(\Gamma (t,s)\right)\right|^2
+\mathbb{E}\left[\left|f_{\bar{\varsigma}}\left(\Gamma_d (t,s)\right)-f_{\bar{\varsigma}}\left(\Gamma (t,s)\right)\right|^2\right]\right)
\left|\tilde{Z}^r (s,t) \right|^2 dsdt \right].
\end{aligned}
\end{equation}
Because $D_r f$, $f_y$, $f_z$, $f_{\varsigma}$, $f_{\bar{y}}$, $f_{\bar{z}}$, $f_{\bar{\varsigma}}$ are bounded and continuous
with respect to $(y,z,\varsigma,\bar{y},\bar{z},\bar{\varsigma})$, using the dominated convergence theorem and (\ref{30}),
we can let $d\rightarrow\infty$ in (\ref{43}) to show that
\begin{equation}\label{37}
\begin{aligned}
\lim\limits_{d\rightarrow\infty}\mathbb{E}\left[
\int_0^T \int_t^T  e^{\beta s} |\delta f^d_r (t,s)|^2 dsdt \right]=0.
\end{aligned}
\end{equation}
In the same way as in the proof of (\ref{37}) yields that
\begin{equation}\label{38}
\begin{aligned}
\lim\limits_{d\rightarrow\infty}\mathbb{E}\left[
\int_0^T \int_t^T  e^{\beta s} |\delta g^d_r (t,s)|^2 dsdt \right]=0.
\end{aligned}
\end{equation}
In view of (\ref{37})-(\ref{38}), and letting $d\rightarrow\infty$ in (\ref{39}), we obtain that
\begin{equation*}
\begin{aligned}
\lim\limits_{d\rightarrow\infty}\mathbb{E}\left[\int_0^T e^{\beta t} |D_r Y_{d+1} (t)-\tilde{Y}^r (t)|^2 dt+\int_0^T \int_t^T  e^{\beta s} |D_r Z_{d+1} (t,s)-\tilde{Z}^r (t,s)|^2 dsdt\right]=0.
\end{aligned}
\end{equation*}
Hence, due to the fact that the operator $D_r$ is closed, we know that
\begin{equation*}
\begin{aligned}
\tilde{Y}^r (t)=D_r Y (t),~~\tilde{Z}^r (t,s)=D_r Z(t,s),~~t,s\in [0,T], a.s.,
\end{aligned}
\end{equation*}
and then Eq. (\ref{33}) holds. By the definition of M-solutions deduced that
\begin{align*}
Y(t)=\mathbb{E}[Y(t)]+\int_0^t Z(t,s) d\overrightarrow{W}(s),~~~a.e.~t\in [0,T],
\end{align*}
and (\ref{41})-(\ref{42}) derived Lemma 5.1 in \cite{Karoui1997}.
Clearly, Eq. (\ref{34}) follows from Lemma 5.1 in \cite{Karoui1997} and Lemma 4.2 in \cite{Bachouch2016}.
\end{proof}

\section{Dynamic risk measure}\label{sec6}
As an application of MF-BDSVIEs to risk management,
this section is devoted to the dynamic risk measure by means of MF-BDSVIEs.

First, we review the definition of dynamic risk measures shown in \cite{Yong2007}.
\begin{definition}\label{def2}
(See \cite{Yong2007}) A map $\rho: [0,T]\times L_{\mathcal{F}_T}^{2} (0,T; \mathbb{R})\mapsto L_{\mathbb{F}}^{2} (0,T; \mathbb{R})$ is called a dynamic risk measure if the following conditions hold:\\
$(1)$ (Past independence) ~Take each $\zeta(\cdot), \bar{\zeta}(\cdot)\in L_{\mathcal{F}_T}^{2} (0,T; \mathbb{R})$, if
\begin{align*}
\zeta(s)=\bar{\zeta}(s),~a.s., ~s\in[t,T],
\end{align*}
for some $t\in[0,T]$, then
\begin{align*}
\rho(t;\zeta(\cdot))=\rho(t;\bar{\zeta}(\cdot)), ~a.s.
\end{align*}
$(2)$ (Monotonicity)~Take each $\zeta(\cdot), \bar{\zeta}(\cdot)\in L_{\mathcal{F}_T}^{2} (0,T; \mathbb{R})$, if
\begin{align*}
\zeta(s)\leq\bar{\zeta}(s),~a.s., ~s\in[t,T],
\end{align*}
for some $t\in[0,T]$, then
\begin{align*}
\rho(s;\zeta(\cdot))\geq\rho(s;\bar{\zeta}(\cdot)), ~a.s., ~s\in[t,T].
\end{align*}
In addition, the dynamic risk measure $\rho: [0,T]\times L_{\mathcal{F}_T}^{2} (0,T; \mathbb{R})\mapsto L_{\mathbb{F}}^{2} (0,T; \mathbb{R})$ is called a dynamic convex risk measure if the following conditions hold:\\
$(3)$ (Translation invariance)~There is a deterministic integrable function $r(\cdot)$ such that for each $\zeta(\cdot)\in L_{\mathcal{F}_T}^{2} (0,T; \mathbb{R})$ and $c\in\mathbb{R}$,
\begin{align*}
\rho(t;\zeta(\cdot)+c)=\rho(t;\zeta(\cdot))-c e^{-\int_t^T r(s)ds}, ~a.s., ~t\in[0,T].
\end{align*}
$(4)$ (Convexity)~Take each $\zeta(\cdot), \bar{\zeta}(\cdot)\in L_{\mathcal{F}_T}^{2} (0,T; \mathbb{R})$ and $\lambda\in [0,1]$,
\begin{align*}
\rho(t;\lambda\zeta(\cdot)+(1-\lambda)\bar{\zeta}(\cdot))\leq\lambda\rho(t;\zeta(\cdot))+(1-\lambda)\rho(t;\bar{\zeta}(\cdot)), ~a.s., ~t\in[0,T].
\end{align*}
Meanwhile, the dynamic risk measure $\rho: [0,T]\times L_{\mathcal{F}_T}^{2} (0,T; \mathbb{R})\mapsto L_{\mathbb{F}}^{2} (0,T; \mathbb{R})$ is called a dynamic coherent risk measure if the axiom $(4)$ is substituted by the following conditions:\\
$(5)$ (Positive homogeneity)~Take each $\zeta(\cdot)\in L_{\mathcal{F}_T}^{2} (0,T; \mathbb{R})$ and $\lambda>0$,
\begin{align*}
\rho(t;\lambda\zeta(\cdot))=\lambda\rho(t;\zeta(\cdot)), ~a.s., ~t\in[0,T].
\end{align*}
$(6)$ (Subadditivity)~Take each $\zeta(\cdot), \bar{\zeta}(\cdot)\in L_{\mathcal{F}_T}^{2} (0,T; \mathbb{R})$,
\begin{align*}
\rho(t;\zeta(\cdot)+\bar{\zeta}(\cdot))\leq\rho(t;\zeta(\cdot))+\rho(t;\bar{\zeta}(\cdot)), ~a.s., ~t\in[0,T].
\end{align*}
\end{definition}

Next, for any $\zeta(\cdot)\in L_{\mathcal{F}_T}^{2} (0,T; \mathbb{R})$, we define
\begin{align}\label{47}
\rho(t;\zeta(\cdot)):=Y^{-\zeta(\cdot)} (t), ~t\in[0,T],
\end{align}
where $Y^{-\zeta(\cdot)}(\cdot)$ is the $1$-st component of the unique solution $(Y^{-\zeta(\cdot)}(\cdot), Z^{-\zeta(\cdot)}(\cdot,\cdot))$
to the MF-BDSVIE:
\begin{equation}\label{44}
\begin{aligned}
Y^{-\zeta(\cdot)} (t)&=-\zeta (t)+\int_{t}^{T} f (t, s, Y^{-\zeta(\cdot)} (s), Z^{-\zeta(\cdot)} (t, s), \mathbb{E}[Y^{-\zeta(\cdot)} (s)])ds\\
&~~~+\int_{t}^{T} g(t, s, Z^{-\zeta(\cdot)} (t, s)) d\overleftarrow{B}(s)
-\int_{t}^{T} Z^{-\zeta(\cdot)} (t, s) d\overrightarrow{W}(s), ~~t\in[0, T],
\end{aligned}
\end{equation}
here $f: \Omega\times \Delta \times \mathbb{R} \times\mathbb{R}\times\mathbb{R}
\mapsto\mathbb{R}$ and $g: \Omega\times \Delta \times \mathbb{R}
\mapsto\mathbb{R}$ are given jointly measurable maps satisfying the assumptions $(A1)$-$(A3)$.\\
Denote by
\begin{align}\label{46}
f(t,s,y,z,\bar{y})=-\frac{r(s)}{2} (y+\bar{y})+h(t,s,z),
\end{align}
where $r(\cdot)$ is a deterministic integral function, and $h(t,s,z): \Omega\times \Delta \times \mathbb{R}
\mapsto\mathbb{R}$ is a given jointly measurable map.

In the sequel, we deal with the properties of dynamic risk measures induced by MF-BDSVIE (\ref{44}).
\begin{theorem}\label{th7}
Suppose that $f$ is given by (\ref{46}), and $f$ and $g$ satisfy $(A1)$-$(A3)$. Then $\rho$ defined by (\ref{47})
is a dynamic convex risk measure if the following conditions hold:\\
$(i)$ (Past independence) ~Take each $\zeta_1(\cdot), \zeta_2(\cdot)\in L_{\mathcal{F}_T}^{2} (0,T; \mathbb{R})$, if
\begin{align*}
\zeta_1(s)=\zeta_2(s),~a.s., ~s\in[t,T],
\end{align*}
for some $t\in[0,T]$, then
\begin{align*}
\rho(t;\zeta_1(\cdot))=\rho(t;\zeta_2(\cdot)), ~a.s.
\end{align*}
$(ii)$ (Monotonicity)~Take each $\zeta_1(\cdot), \zeta_2(\cdot)\in L_{\mathcal{F}_T}^{2} (0,T; \mathbb{R})$, if
\begin{align*}
\zeta_1(s)\leq\zeta_2(s),~a.s., ~s\in[t,T],
\end{align*}
for some $t\in[0,T]$, then
\begin{align*}
\rho(s;\zeta_1(\cdot))\geq\rho(s;\zeta_2(\cdot)), ~a.s., ~s\in[t,T].
\end{align*}
$(iii)$ (Translation invariance)~If $r(\cdot)$ is a deterministic integrable function, then for each $\zeta(\cdot)\in L_{\mathcal{F}_T}^{2} (0,T; \mathbb{R})$ and $c\in\mathbb{R}$,
\begin{align*}
\rho(t;\zeta(\cdot)+c)=\rho(t;\zeta(\cdot))-c e^{-\int_t^T r(s)ds}, ~a.s., ~t\in[0,T].
\end{align*}
$(iv)$ (Convexity)~If $f$ is convex with respect to $(y,z,\bar{y})$, that is,
\begin{equation}\label{50}
\begin{aligned}
&~~~~f(t,s,\lambda y_1+(1-\lambda)y_2,\lambda z_1+(1-\lambda)z_2,\lambda \bar{y}_1+(1-\lambda)\bar{y}_2)\\
&\leq\lambda f(t,s,y_1,z_1,\bar{y}_1)+(1-\lambda) f(t,s,y_2,z_2,\bar{y}_2), ~~a.s., ~(t,s)\in\Delta,~\lambda\in[0,1],
\end{aligned}
\end{equation}
and $g$ satisfies the following condition:
\begin{align}\label{51}
g(t,s,\lambda z_1+(1-\lambda)z_2)
=\lambda g(t,s,z_1)+(1-\lambda) g(t,s,z_2), ~a.s., ~(t,s)\in\Delta,~\lambda\in[0,1],
\end{align}
then take each $\zeta_1(\cdot), \zeta_2(\cdot)\in L_{\mathcal{F}_T}^{2} (0,T; \mathbb{R})$ and $\lambda\in [0,1]$,
\begin{align}\label{52}
\rho(t;\lambda\zeta_1(\cdot)+(1-\lambda)\zeta_2(\cdot))\leq\lambda\rho(t;\zeta_1(\cdot))+(1-\lambda)\rho(t;\zeta_2(\cdot)), ~a.s., ~t\in[0,T].
\end{align}
Moreover, suppose that $f$ is given by (\ref{46}), and $f$ and $g$ satisfy $(A1)$-$(A3)$. Then $\rho$ defined by (\ref{47})
is a dynamic coherent risk measure if $(i)$-$(iii)$ and the following conditions hold:\\
$(v)$ (Positive homogeneity)~If $f$ is positive homogeneity with respect to $(y,z,\bar{y})$, and
$g$ is positive homogeneity with respect to $z$, that is,
\begin{align*}
&f(t,s,\lambda y,\lambda z,\lambda\bar{y})
=\lambda f(t,s,y,z,\bar{y}), ~~a.s., ~(t,s)\in\Delta,~\lambda > 0,\\
&~~~~~~~g(t,s,\lambda z)
=\lambda g(t,s,z), ~~a.s., ~(t,s)\in\Delta,~\lambda >0,
\end{align*}
then take each $\zeta(\cdot)\in L_{\mathcal{F}_T}^{2} (0,T; \mathbb{R})$ and $\lambda>0$,
\begin{align*}
\rho(t;\lambda\zeta(\cdot))=\lambda\rho(t;\zeta(\cdot)), ~a.s., ~t\in[0,T].
\end{align*}
$(vi)$ (Subadditivity)~If $f$ is subadditivity with respect to $(y,z,\bar{y})$, that is,
\begin{align*}
f(t,s,y_1+y_2, z_1+z_2,\bar{y}_1+\bar{y}_2)
\leq f(t,s,y_1,z_1,\bar{y}_1)+ f(t,s,y_2,z_2,\bar{y}_2), ~a.s., ~(t,s)\in\Delta,
\end{align*}
and $g$ is additivity with respect to $z$, that is,
\begin{align*}
g(t,s, z_1+z_2)
= g(t,s,z_1)+ g(t,s,z_2), ~a.s., ~(t,s)\in\Delta,
\end{align*}
then take each $\zeta_1(\cdot), \zeta_2(\cdot)\in L_{\mathcal{F}_T}^{2} (0,T; \mathbb{R})$,
\begin{align*}
\rho(t;\zeta_1(\cdot)+\zeta_2(\cdot))\leq\rho(t;\zeta_1(\cdot))+\rho(t;\zeta_2(\cdot)), ~a.s., ~t\in[0,T].
\end{align*}
\end{theorem}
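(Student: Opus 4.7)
The plan is to verify each of the six axioms in Definition \ref{def2} separately, combining the uniqueness part of Theorem \ref{th1} (applied in the one-dimensional form, Corollary \ref{cor1}) with the comparison theorem (Theorem \ref{th3}), and with a few direct substitutions into MF-BDSVIE (\ref{44}).

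For (i), I would restrict (\ref{44}) to $s \in [t,T]$ and note that as a backward equation it depends on $\zeta$ only through the values $\zeta(s)$ with $s \in [t,T]$. If $\zeta_1(s)=\zeta_2(s)$ a.s. on $[t,T]$, then $(Y^{-\zeta_1},Z^{-\zeta_1})|_{[t,T]}$ and $(Y^{-\zeta_2},Z^{-\zeta_2})|_{[t,T]}$ solve one and the same MF-BDSVIE on $[t,T]$, so by the uniqueness part of Corollary \ref{cor1} they coincide there, and in particular at $t$. For (ii), the terminal data satisfy $-\zeta_1 \geq -\zeta_2$ on $[t,T]$, and the generator (\ref{46}) is affine in $y$ and $\bar y$ with identical coefficient $-r(s)/2$, so Theorem \ref{th3} (after an exponential change of variable $\widetilde Y(s)=e^{\int_0^s r(u)du}Y(s)$ if $r$ is not sign-constrained, to turn the linear term into a nondecreasing one) yields $Y^{-\zeta_1}(s) \geq Y^{-\zeta_2}(s)$ on $[t,T]$. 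For (iii), I would make the ansatz $Y^{-\zeta(\cdot)-c}(t)=Y^{-\zeta(\cdot)}(t)-c\,e^{-\int_t^T r(s)ds}$ and $Z^{-\zeta(\cdot)-c}(t,s)=Z^{-\zeta(\cdot)}(t,s)$, plug it back into (\ref{44}), and use the identity $\int_t^T r(s) e^{-\int_s^T r(u)du}\,ds = 1 - e^{-\int_t^T r(s)ds}$ so that the added constants balance; uniqueness then promotes the ansatz to an equality.

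For (iv), I would define $\widetilde Y(t):=\lambda Y^{-\zeta_1}(t)+(1-\lambda)Y^{-\zeta_2}(t)$ and $\widetilde Z(t,s):=\lambda Z^{-\zeta_1}(t,s)+(1-\lambda)Z^{-\zeta_2}(t,s)$. Taking the convex combination of the two MF-BDSVIEs satisfied by $(Y^{-\zeta_i},Z^{-\zeta_i})$, and using hypothesis (\ref{50}) on $f$ together with linearity (\ref{51}) of $g$ in $z$ (which turns $\lambda g(t,s,Z^{-\zeta_1})+(1-\lambda)g(t,s,Z^{-\zeta_2})$ into $g(t,s,\widetilde Z)$), we obtain that $(\widetilde Y,\widetilde Z)$ satisfies a MF-BDSVIE with terminal data $-(\lambda\zeta_1+(1-\lambda)\zeta_2)$ and a generator $\widetilde f$ pointwise larger than or equal to $f(t,s,\widetilde Y(s),\widetilde Z(t,s),\mathbb{E}[\widetilde Y(s)])$. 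Applying Theorem \ref{th3} to $\widetilde Y$ and $Y^{-(\lambda\zeta_1+(1-\lambda)\zeta_2)}$ (both driven by $g(\cdot,\cdot,\widetilde Z)$ in the $d\overleftarrow{B}$-integral after the reduction above) delivers $\widetilde Y(t) \geq Y^{-(\lambda\zeta_1+(1-\lambda)\zeta_2)}(t)$, which is precisely (\ref{52}). For (v) I would check directly that $(\lambda Y^{-\zeta},\lambda Z^{-\zeta})$ solves (\ref{44}) with terminal $-\lambda\zeta$ (using positive homogeneity of $f$ in $(y,z,\bar y)$ and of $g$ in $z$) and conclude by uniqueness. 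For (vi) the argument is parallel to (iv): put $\widetilde Y:=Y^{-\zeta_1}+Y^{-\zeta_2}$, $\widetilde Z:=Z^{-\zeta_1}+Z^{-\zeta_2}$, use subadditivity of $f$ and additivity of $g$, and apply the comparison theorem.

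The main obstacles I expect are two. The first, in (iv) and (vi), is getting the direction of the inequality between the generator of $(\widetilde Y,\widetilde Z)$ and the generator of the MF-BDSVIE driven by the combined terminal datum to be exactly what is required by Theorem \ref{th3}; here one must keep the $g$-part \emph{equal} (this is why the linearity/additivity hypothesis on $g$ is imposed, rather than mere convexity) so that only the drift carries the inequality, and that drift inequality goes in the correct direction for comparison. The second, in (ii), (iv), (vi), is the technical point that Theorem \ref{th3} requires the generator to be nondecreasing in $y$ and $\bar y$, while (\ref{46}) is affine with coefficient $-r(s)/2$ of unrestricted sign; I would dispose of this by the exponential discounting transformation mentioned above, which reduces the affine part to zero and leaves a generator depending only on the nondecreasing remainder. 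Past-independence (i) and positive homogeneity (v) are essentially uniqueness statements and should be straightforward.
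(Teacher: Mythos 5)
Your overall strategy coincides with the paper's: axioms $(i)$, $(iii)$, $(v)$ are handled by uniqueness (Theorem \ref{th1}/Corollary \ref{cor1}) together with direct substitution of an explicit ansatz into (\ref{44}), and $(ii)$, $(iv)$, $(vi)$ are reduced to the comparison theorem, with $(iv)$ and $(vi)$ using exactly the convex-combination supersolution $(\hat Y,\hat Z)=(\lambda Y^{-\zeta_1}+(1-\lambda)Y^{-\zeta_2},\,\lambda Z^{-\zeta_1}+(1-\lambda)Z^{-\zeta_2})$ and the linearity of $g$ in $z$ so that only the drift carries the inequality. The one place you genuinely depart from the paper is the exponential change of variable $\widetilde Y(s)=e^{\int_0^s r(u)du}Y(s)$, introduced to kill the linear term $-\tfrac{r(s)}{2}(y+\bar y)$ in (\ref{46}) and thereby secure the monotonicity hypothesis of Theorem \ref{th3} for $r$ of arbitrary sign. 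That device does not transfer from BSDEs to Volterra equations: there is no It\^{o} formula here, and substituting $Y(s)=e^{-\int_0^s r(u)du}\widetilde Y(s)$ into (\ref{44}) and multiplying through by $e^{\int_0^t r(u)du}$ turns the coefficient of $\widetilde Y(s)$ into $-\tfrac{r(s)}{2}e^{-\int_t^s r(u)du}$ rather than $0$ --- the cancellation in the BSDE case comes from the extra drift $rY\,dt$ produced by It\^{o}'s formula, which is unavailable for a two-parameter kernel. So this particular repair fails as stated. To be fair, the issue it targets is real and is left unaddressed in the paper, which invokes Theorem \ref{th3} directly and thus tacitly assumes $y\mapsto f$ and $\bar y\mapsto f$ are nondecreasing (i.e.\ $r(\cdot)\le 0$ for the generator (\ref{46})); but if you want $(ii)$, $(iv)$, $(vi)$ for a sign-changing $r$ you need an argument other than discounting. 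Aside from this, your proof is the paper's proof.
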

\begin{proof}
$(i)$ By the definition of $\rho$, it is easy to get the past independence.\\
$(ii)$ Since $\zeta_1(\cdot)\leq\zeta_2(\cdot)$, from Theorem \ref{th3} we have that $Y^{-\zeta_1(\cdot)} (t)\geq Y^{-\zeta_2(\cdot)} (t)$. Then,
\begin{align*}
\rho(t;\zeta_1(\cdot))=Y^{-\zeta_1(\cdot)} (t)\geq Y^{-\zeta_2(\cdot)} (t)=\rho(t;\zeta_2(\cdot)).
\end{align*}
$(iii)$ Assume that $(Y^{c}(\cdot), Z^{c}(\cdot,\cdot))$ is the solution of Eq. (\ref{44})
with respect to the terminal $-\zeta(\cdot)-c$. Thus,
\begin{equation}\label{48}
\begin{aligned}
&-\zeta(t)-c-\int_{t}^{T}\left[ \frac{r(s)}{2}\left(Y^{0} (s)-c e^{-\int_s^T r(u)du}+\mathbb{E}\left[ Y^{0} (s)-c e^{-\int_s^T r(u)du} \right]\right)-h(t,s,Z^0 (t,s))\right] ds\\
&+\int_{t}^{T} g(t,s,Z^0 (t,s))d\overleftarrow{B}(s)-\int_{t}^{T}Z^0 (t,s)d\overrightarrow{W}(s)\\
&=Y^{0}(t)-c+c \int_{t}^{T} \left(r(s)e^{-\int_{s}^{T} r(u) du}\right) ds
=Y^{0}(t)-c+c e^{-\int_{s}^{T} r(u) du} \left|_t^T \right.\\
&=Y^{0}(t)-c e^{-\int_{t}^{T} r(u) du}.
\end{aligned}
\end{equation}
Based on the uniqueness of solutions to Eq. (\ref{44}), (\ref{48}) implies that
\begin{align}\label{49}
Y^{c}(t)=Y^{0}(t)-c e^{-\int_{t}^{T} r(s) ds}, \  \ Z^{c}(t,s)=Z^{0}(t,s).
\end{align}
By the definition of $\rho$ and from (\ref{49}), we can obtain that
\begin{align*}
\rho(t;\zeta(\cdot)+c)=\rho(t;\zeta(\cdot))-c e^{-\int_t^T r(s)ds}, ~a.s., ~t\in[0,T].
\end{align*}
$(iv)$ For the convexity axiom, we shall prove that
\begin{align*}
\rho(t;\lambda\zeta_1(\cdot)+(1-\lambda)\zeta_2(\cdot))\leq\lambda\rho(t;\zeta_1(\cdot))+(1-\lambda)\rho(t;\zeta_2(\cdot)), ~a.s., ~t\in[0,T],~\lambda\in [0,1],
\end{align*}
which is equivalent to
\begin{align*}
Y^{-\left(\lambda\zeta_1(\cdot)+(1-\lambda)\zeta_2(\cdot)\right)}(t)\leq\lambda Y^{-\zeta_1(\cdot)}(t)+(1-\lambda)Y^{-\zeta_2(\cdot)}(t), ~a.s., ~t\in[0,T],~\lambda\in [0,1].
\end{align*}
Let $(\check{Y}(\cdot),\check{Z}(\cdot,\cdot))\in L_{\mathbb{F}}^{2} (0,T; \mathbb{R})\times L_{\mathbb{F}}^{2} (\Delta; \mathbb{R})$ be the solution of MF-BDSVIEs:
\begin{equation*}
\begin{aligned}
\check{Y} (t)&=-\lambda\zeta_1 (t)-(1-\lambda)\zeta_2 (t)+\int_{t}^{T} f (t, s, \check{Y} (s), \check{Z} (t, s), \mathbb{E}[\check{Y} (s)])ds\\
&~~~+\int_{t}^{T} g(t, s, \check{Z} (t, s)) d\overleftarrow{B}(s)
-\int_{t}^{T} \check{Z} (t, s) d\overrightarrow{W}(s), ~~t\in[0, T].
\end{aligned}
\end{equation*}
Denote
\begin{equation*}
\begin{aligned}
&\hat{Y} (t):=\lambda Y^{-\zeta_1 (\cdot)} (t)+(1-\lambda)Y^{-\zeta_2 (\cdot)} (t),\\
&\hat{Z} (t,s):=\lambda Z^{-\zeta_1 (\cdot)}(t,s)+(1-\lambda)Z^{-\zeta_2 (\cdot)}(t,s),
\end{aligned}
\end{equation*}
where $(Y^{-\zeta_i (\cdot)}(\cdot),Z^{-\zeta_i (\cdot)} (\cdot,\cdot))\in L_{\mathbb{F}}^{2} (0,T; \mathbb{R})\times L_{\mathbb{F}}^{2} (\Delta; \mathbb{R})~(i=1,2)$ satisfies
\begin{equation*}
\begin{aligned}
Y^{-\zeta_i (\cdot)} (t)&=-\zeta_i (t)+\int_{t}^{T} f (t, s, Y^{-\zeta_i (\cdot)} (s), Z^{-\zeta_i (\cdot)} (t, s), \mathbb{E}[Y^{-\zeta_i (\cdot)} (s)])ds\\
&~~~+\int_{t}^{T} g(t, s, Z^{-\zeta_i (\cdot)} (t, s)) d\overleftarrow{B}(s)
-\int_{t}^{T} Z^{-\zeta_i (\cdot)} (t, s) d\overrightarrow{W}(s), ~~t\in[0, T].
\end{aligned}
\end{equation*}
By conditions (\ref{50})-(\ref{51}) yield that
\begin{equation*}
\begin{aligned}
\hat{Y} (t)&=-\lambda\zeta_1 (t)-(1-\lambda)\zeta_2 (t)
+\int_{t}^{T} \left(\lambda f(t, s, Y^{-\zeta_1 (\cdot)} (s), Z^{-\zeta_1 (\cdot)} (t, s), \mathbb{E}[Y^{-\zeta_1 (\cdot)} (s)])\right.\\
&~~~\left.+ (1-\lambda)f(t, s, Y^{-\zeta_2 (\cdot)} (s), Z^{-\zeta_2 (\cdot)} (t, s), \mathbb{E}[Y^{-\zeta_2 (\cdot)} (s)]) \right)ds\\
&~~~+\int_{t}^{T} \left(\lambda g(t, s, Z^{-\zeta_1 (\cdot)} (t, s))+(1-\lambda)g(t, s, Z^{-\zeta_2 (\cdot)} (t, s))\right) d\overleftarrow{B}(s)
-\int_{t}^{T} \hat{Z} (t, s) d\overrightarrow{W}(s)\\
&\geq -\lambda\zeta_1 (t)-(1-\lambda)\zeta_2 (t)
+\int_{t}^{T} f(t, s, \hat{Y} (s), \hat{Z} (t, s), \mathbb{E}[\hat{Y} (s)])ds\\
&~~~+\int_{t}^{T} g(t, s, \hat{Z} (t, s))d\overleftarrow{B}(s)
-\int_{t}^{T} \hat{Z} (t, s) d\overrightarrow{W}(s).
\end{aligned}
\end{equation*}
Using Theorems \ref{th1}-\ref{th3}, one can derive that
\begin{align*}
\check{Y} (t)\leq\hat{Y} (t),~a.s.,~ t\in[0,T].
\end{align*}
In consequence,
\begin{align*}
\rho(t;\lambda\zeta_1(\cdot)+(1-\lambda)\zeta_2(\cdot))=\check{Y} (t)&\leq\hat{Y} (t)\\
&=\lambda Y^{-\zeta_1 (\cdot)} (t)+(1-\lambda)Y^{-\zeta_2 (\cdot)} (t)\\
&=\lambda \rho(t;\zeta_1(\cdot))+(1-\lambda)\rho(t;\zeta_2(\cdot)),~a.s.,~ t\in[0,T],
\end{align*}
which is the required (\ref{52}).\\
In the same way as in the proof of $(iv)$, we can obtain $(v)$-$(vi)$.
\end{proof}

\section*{Acknowledgements}
The authors would like to thank the editors and the anonymous referees for
their constructive discussions
and comments which lead to a significant improvement of this paper.
This research is supported by the National Natural Science Foundation of China
(Grant Nos. 12101218, 11971483), the National Social Science Foundation of China (Grant No. 20BTJ044) and the Provincial Natural Science Foundation of Hunan (Grant Nos. 2023JJ40203, 2023JJ30642).








\end{document}